\newtheorem{theorem}{Theorem}[section]
\newtheorem{lemma}[theorem]{Lemma}
\newtheorem{proposition}[theorem]{Proposition}
\newtheorem{corollary}[theorem]{Corollary} 
\theoremstyle{definition}  
\newtheorem{definition}[theorem]{Definition}
\newtheorem{example}[theorem]{Example}
\newtheorem{conjecture}[theorem]{Conjecture}  
\newtheorem{question}[theorem]{Question}
\newtheorem{remark}[theorem]{Remark}
\newcommand{\Tr}{\text{Tr}}
\newcommand{\id}{\text{id}}
\newcommand{\Ver}{\text{Ver}}
\newcommand{\Fr}{\text{Fr}}
\newcommand{\FPdim}{\text{FPdim}} 
\newcommand{\End}{\text{End}} 
\renewcommand{\Vec}{\text{Vec}}
\newcommand{\sVec}{\text{sVec}}
\newcommand{\Hom}{\text{Hom}} 
\newcommand{\uRep}{\underline{\text{Rep}}}
\newcommand{\Rep}{\text{Rep}}
\newcommand{\ev}{\text{ev}}
\newcommand{\coev}{\text{coev}}
\newcommand{\eps}{\varepsilon}
\newcommand{\bL}{{\bar L}}
\newcommand{\C}{\mathcal{C}}
\newcommand{\D}{\mathcal{D}}
\newcommand{\F}{\mathbb{F}}
\renewcommand{\L}{\mathcal{L}}
\newcommand{\A}{\mathcal{A}}
\newcommand{\N}{\mathcal{N}}
\renewcommand{\O}{\mathcal{O}}
\newcommand{\be}{\mathbf{1}}
\renewcommand{\k}{\mathbf{k}}
\renewcommand{\be}{\mathbf{1}}
\newcommand{\T}{{\mathcal T}}
\newcommand{\BZ}{{\mathbb Z}}
\newcommand{\BR}{{\mathbb R}}
\newcommand{\bt}{\boxtimes}
\newcommand{\ot}{\otimes}
\begin{document}

\title{On symmetric fusion categories in positive characteristic}

\author{Victor Ostrik}
\address{V.O.: Department of Mathematics,
University of Oregon, Eugene, OR 97403, USA}
\email{vostrik@math.uoregon.edu}

\begin{abstract} We propose a conjectural extension to positive characteristic case 
of a well known Deligne's theorem on the existence of super fiber functors. We prove
our conjecture in the special case of semisimple categories with finitely many
isomorphism classes of simple objects. 
\end{abstract}

\date{\today} 
\maketitle  

\section{Introduction}
\subsection{}
Let $\k$ be an algebraically closed field of characteristic $p\ge 0$. 
We recall that a {\em symmetric tensor category} $\C$ is a category
endowed with the functor $\ot :\C \times \C \to \C$ of tensor product,
and with associativity and commutativity isomorphisms and unit object $\be$
satisfying suitable axioms, see e.g. \cite{SR} or \cite{EGNO}. In this paper we consider symmetric
tensor categories $\C$ satisfying the following assumptions:

1) $\C$ is an essentially small $\k-$linear abelian category such that any morphism
space is finite dimensional and each object has finite length;

2) the functor $\ot$ is $\k-$linear and the natural morphism $\k \to \End(\be)$ to the endomorphism ring of the unit object is an isomorphism;

3) $\C$ is rigid (this implies that the functor $\ot$ is exact in each variable, see \cite[Proposition 1.16]{DM}).

Such categories are precisely tensor categories satisfying finiteness assumptions
of \cite[2.12.1]{Dta}; they were called {\em pre-Tannakian} in \cite[2.1]{CO}. 


\begin{example} \label{gexa}
(i) The category $\Vec$ of finite dimensional vector spaces is pre-Tannakian.
Now let $p\ne 2$. Then the category $\sVec$ of finite dimensional super vector spaces
over $\k$ is pre-Tannakian. 

(ii) Let $G$ be an affine group scheme over $\k$. Then the category
$\Rep_\k(G)$ of finite dimensional representations of $G$ over $\k$ is
pre-Tannakian.

(iii) (see \cite[0.3]{Dte}) 
Let $G$ be an affine super group scheme over $\k$ and let $\eps \in G(\k )$ be an
element of order $\le 2$ such that its action by conjugation on $G$ coincides with the
parity automorphism of $G$. Let $\Rep_\k(G,\eps)$ be the full subcategory of super representations
of $G$ such that $\eps$ acts by parity automorphism. Then $\Rep_\k(G,\eps)$ is
pre-Tannakian. A special case of this construction is when $G$ is a finite group and
$\eps \in G$ is a central element of order $\le 2$, see \cite[0.4 (i)]{Dte}.

(iv) (see \cite[Section 8]{Dta}) Let $\C$ be a pre-Tannakian category and let $\pi \in \C$ be its
{\em fundamental group} as defined in \cite[8.13]{Dta}. Thus $\pi$ is an affine groups scheme in
the category $\C$ and it acts on any object of $\C$ in 
a canonical way. Let $G$ be an affine group scheme in the category $\C$ and let 
$\eps :\pi \to G$ be a homomorphism such that the action of $\pi$ on $G$ by conjugations
coincides with the canonical action. Let $\Rep_\C(G)$ be the category of representations of 
$G$ in category $\C$ and let $\Rep_\C(G,\eps)$ be the full subcategory of $\Rep_\C(G)$
consisting of representations such that the action of $\pi$ via homomorphism $\eps$
coincides with the canonical action of $\pi$. Then both $\Rep_\C(G)$ and $\Rep_\C(G,\eps)$
are pre-Tannakian. Example (iii) is a special case of this with $\C =\sVec$ since the
fundamental group of $\sVec$ is the finite group of order 2 and its canonical action is 
given by the parity automorphism.
\end{example}

In this paper a {\em symmetric tensor functor} between symmetric tensor categories is a monoidal
functor compatible with the commutativity isomorphism. Recall (see \cite{SR}) that a {\em fiber functor} for
a pre-Tannakian category $\C$ is a $\k-$linear exact symmetric tensor functor $\C \to \Vec$. Similalrly, a {\em super fiber functor}
is a $\k-$linear exact symmetric tensor functor $\C \to \sVec$, see \cite{Dte}.

For example the forgetful functor $\Rep_\k(G)\to \Vec$ assigning to a representation its underlying vector
space is a fiber functor. Similarly, the forgetful functor $\Rep_\k(G,\eps)\to \sVec$ is
a super fiber functor. Conversely, in the theory of {\em Tannakian categories} (see \cite{SR, DM, Dta}) one shows
that any pre-Tannakian category $\C$ with a fiber functor is tensor equivalent to $\Rep_\k(G)$ endowed with
the forgetful functor. Similarly and more generally, any  pre-Tannakian category $\C$ with a super fiber functor is tensor equivalent 
to $\Rep_\k(G, \eps)$ endowed with the forgetful functor, see \cite[8.19]{Dta}. These results reduce many questions 
about pre-Tannakian categories to the theory of affine group schemes and super schemes.

Furthermore, Deligne showed that for $p=0$ many pre-Tannakian categories admit a super fiber functor. Namely,
we say that $\C$ is of {\em subexponential growth} if for any object $X\in \C$ the length of the objects $X^{\ot n}$ is
bounded by the function $a_X^n$ for a suitable $a_X\in \BR$, see \cite{Dte}, \cite[9.11]{EGNO}.

\begin{theorem}[\cite{Dte} Th\'eor\`eme 0.6]\label{p=0}
 Assume that $p=0$. A pre-Tannakian category $\C$ admits a super fiber functor if and only if
it is of subexponential growth.
\end{theorem}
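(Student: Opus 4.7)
The ``only if'' direction is elementary: if $F: \C \to \sVec$ is a super fiber functor, then $F$ is $\k$-linear, exact, and monoidal, so $F$ sends simple objects to nonzero super vector spaces and hence $\text{length}(X^{\otimes n}) \le \dim_\k F(X^{\otimes n}) = (\dim F(X))^n$. This gives subexponential growth with $a_X = \dim F(X)$.

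For the converse I would follow Deligne in two stages. \textbf{Stage 1 (rectangular Schur vanishing).} Since $\text{char}(\k) = 0$, the group algebra $\k[S_n]$ is semisimple, and the symmetric braiding yields a canonical $S_n$-action on $X^{\otimes n}$, giving a decomposition $X^{\otimes n} \cong \bigoplus_{\lambda \vdash n} V_\lambda \otimes S_\lambda(X)$ in $\C$ with $V_\lambda$ the Specht module and $S_\lambda(X)$ the corresponding Schur functor. I would prove that subexponential growth forces, for each $X$, the existence of integers $p, q \ge 0$ with $S_{((q+1)^{p+1})}(X) = 0$, i.e. some Schur functor attached to a rectangular Young diagram annihilates $X$. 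The strategy is to show that the set $\Lambda(X) = \{\lambda : S_\lambda(X) \ne 0\}$ has a monotonicity property with respect to containment of Young diagrams; if $\Lambda(X)$ contained every rectangle, then one would have $\text{length}(X^{\otimes n}) \ge \sum_{\lambda \in \Lambda(X),\, |\lambda|=n} \dim V_\lambda$, which outpaces any $a_X^n$ by Weyl/hook-length asymptotics.

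\textbf{Stage 2 (construction of the super fiber functor).} The rectangular vanishing from Stage 1 attaches to each $X$ a well-defined super dimension $(p_X, q_X)$. To promote this pointwise data to a functor, I would construct a commutative algebra object $A$ in $\text{Ind-}\C$ such that the category of $A$-modules in $\C$ is tensor equivalent to $\Rep_\k(G, \eps)$ for some affine super group scheme $(G, \eps)$ as in Example \ref{gexa}(iii); composition with the forgetful functor then yields the desired super fiber functor. The algebra $A$ is produced inductively over the simple objects of $\C$: one uses the fundamental group $\pi$ of Example \ref{gexa}(iv) to detect the $\BZ/2$-grading, and the rectangular-vanishing hypothesis to guarantee that each simple object splits inside $A$-$\Mod$ into a super vector space of the predicted super dimension.

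The principal obstacle is Stage 2. Stage 1 is essentially Schur-Weyl bookkeeping, using only semisimplicity of $\k[S_n]$ and asymptotics of $\dim V_\lambda$, and is largely mechanical. Stage 2, by contrast, requires assembling a single global object $A$ that realizes a coherent choice of super decomposition for \emph{every} object of $\C$ simultaneously. This uses the characteristic-zero hypothesis in an essential way: semisimplicity of endomorphism algebras and the availability of all Young symmetrizers inside $\k[S_n]$ are exactly what fail in characteristic $p$, which is why the positive-characteristic analogue needs the more delicate formulation conjectured in the present paper.
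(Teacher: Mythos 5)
The paper does not prove this statement; it is quoted verbatim from Deligne's \emph{Cat\'egories tensorielles} \cite[Th\'eor\`eme 0.6]{Dte} and used as a black box, most visibly in the proof of Proposition~\ref{ndgfib}. There is therefore no internal proof to compare against, and the only thing to assess is whether your sketch would actually recover Deligne's theorem.

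The easy direction and Stage~1 are sound. A $\k$-linear exact tensor functor to $\sVec$ is automatically faithful, so length is bounded above by dimension. In characteristic zero $\k[S_n]$ is semisimple, the set $\Lambda(X)$ of partitions $\lambda$ with $S_\lambda(X)\ne 0$ is downward closed under inclusion of diagrams (Littlewood--Richardson positivity), every partition is contained in a rectangle, and $\sum_{|\lambda|=n}\dim V_\lambda$ is the number of involutions in $S_n$, which is superexponential; so subexponential growth forces the absence of some rectangle from $\Lambda(X)$. This is essentially Deligne's characterization of moderate growth via Schur functor vanishing.

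Stage~2 is where the proposal has a genuine gap, and where it also misstates the mechanism. Deligne does not build $A$ inductively over the simple objects, and the fundamental group $\pi$ is an \emph{output} of Tannakian reconstruction once a super fiber functor exists --- it cannot be used to manufacture one, so invoking it here is circular. The actual work is roughly this: one first reduces to the case where $\C$ has a tensor generator $X$ with $S_{(q+1)^{p+1}}(X)=0$; one then constructs a nonzero ind-commutative superalgebra $A$ in $\Ind(\C\bt\sVec)$ as the coordinate ring of the ind-affine super-scheme of isomorphisms between (the base change of) $X$ and a fixed super vector space of superdimension $p|q$, and the rectangular vanishing is precisely what guarantees this ring is nonzero, i.e.\ that the torsor is nonempty; a base-change/descent argument then produces the super fiber functor; finally a filtered colimit argument removes the finite-generation hypothesis. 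None of these steps --- the reduction to the $\otimes$-finitely-generated case, the torsor construction, the non-emptiness argument drawn from rectangular vanishing, the descent --- appears in your sketch. As written, Stage~2 restates the desired conclusion (``each simple object splits into a super vector space of the predicted super dimension'') rather than proving it, so the hard half of the theorem is missing.
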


\subsection{}\label{conjsec}
The main goal of this paper is to propose a conjectural extension of Theorem \ref{p=0} to the case $p\ne 0$.
The counterexamples constructed by Gelfand and Kazhdan in \cite{GK} (see also \cite{A,GM}) show that a direct 
counterpart of Theorem \ref{p=0} fails for $p>0$. For instance for $p=5$ there exists a semisimple pre-Tannakian
category $\C$, called {\em Yang-Lee category} or {\em Fibbonacci category}, 
with two isomorphism classes $\be$ and $X$ of simple objects and such that $X\ot X=\be \oplus X$,
see \cite{GK, GM} and Example \ref{lexam} below.
It is clear that this category has no super fiber functors since for any monoidal functor $F: \C \to \sVec$ the dimension $d$
of vector space $F(X)$ would be a root of the equation $d^2=1+d$ which is impossible. 

Thus in Section \ref{Frobe} for each prime $p$ we introduce the universal Verlinde category $\Ver_p$ which is a 
semisimple pre-Tannakian
category with $p-1$ isomorphism classes of simple objects (this category is equivalent to 
one of the categories constructed in \cite{GK, GM}, see Section \ref{sl2}).

\begin{conjecture}\label{mconj}
 Assume that $p>0$. A pre-Tannakian category $\C$ of subexponential growth admits a 
(unique up to isomorphism) $\k-$linear exact symmetric tensor functor $\C \to \Ver_p$. 
\end{conjecture}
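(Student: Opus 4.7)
The plan is to extend Deligne's argument for Theorem \ref{p=0} to positive characteristic, with $\Ver_p$ replacing $\sVec$ as the universal target. Recall that in characteristic zero, Deligne's proof proceeds by using subexponential growth to bound the categorical dimension of each $X\in \C$, so that some antisymmetric power $\wedge^n X$ vanishes, and then invoking Schur--Weyl duality via $\k[S_n]$ to factor $\C$ through $\sVec$. In characteristic $p$ both halves of this argument break: the group algebra $\k[S_n]$ is no longer semisimple for $n\ge p$, and categorical dimensions can take arbitrary values in $\F_p$ rather than lying in $\BZ\subseteq \k$. The Verlinde category $\Ver_p$ is precisely engineered to absorb these phenomena, being the semisimple quotient of $\Rep_\k(\BZ/p\BZ)$ and receiving Frobenius-type maps from any characteristic $p$ symmetric tensor category.

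The first technical step I would take is to construct a Frobenius endofunctor on $\C$ landing in $\C\bt \Ver_p$, assigning to each $X$ a ``twisted $p$th power'' that records the nonsemisimple behavior of $X^{\ot p}$ relative to the $\k[S_p]$-action. This is the natural positive-characteristic replacement for Deligne's antisymmetrizers, and analogous constructions of Etingof suggest this is the right tool. The subexponential growth hypothesis should ensure that iterating this twist stabilizes, yielding in the limit a well-defined symmetric tensor functor; this is the analog of the fact that in the characteristic zero argument the wedge powers eventually vanish. Once a candidate functor is in hand, I would invoke Tannakian reconstruction as in Example \ref{gexa}(iv) over $\C=\Ver_p$, recognizing the target as $\Rep_{\Ver_p}(G,\eps)$ for an appropriate affine group scheme in $\Ver_p$. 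Uniqueness up to isomorphism should come from the universal nature of $\Ver_p$: any two such functors must agree on the Frobenius twists, and the reconstruction machinery promotes agreement there to agreement on the nose.

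The hard part will be the non-semisimple portion of $\C$. In characteristic $p$ there can exist indecomposable objects with vanishing Frobenius--Perron dimension and trivial categorical trace, yet nontrivial contribution to the tensor structure; these ``phantom'' objects are invisible to the naive dimension bookkeeping but obstruct the direct construction of a fiber functor. For the special semisimple case with finitely many simples, no such phantoms exist, and the argument should reduce to controlling the possible fusion rings by combining the Frobenius twist with Frobenius--Perron dimension arithmetic modulo $p$. Extending the argument to the full conjecture would require genuinely taming the negligible ideal in $\C$, perhaps via a filtration whose associated graded is semisimple, together with a deformation-theoretic lifting of the $\Ver_p$-valued functor from the graded pieces back to $\C$ itself; I expect this lifting step, rather than the Frobenius-twist construction, to be the chief obstacle.
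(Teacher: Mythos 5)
First, note that the statement you are asked to ``prove'' is Conjecture~\ref{mconj}, which the paper does \emph{not} prove in general: the paper only establishes the special case of semisimple pre-Tannakian categories with finitely many simple objects (Theorem~\ref{main}), and even there only existence, not uniqueness. So what should be assessed is how your sketch compares to the paper's actual proof of Theorem~\ref{main} and whether your outline of the remaining difficulties is reasonable.

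Your opening move is correct and matches the paper: the Frobenius functor $\Fr: \C \to \C^{(1)} \bt \Ver_p$, built by taking $X \mapsto X^{\ot p}$ with its $C_p$-equivariant structure and then passing to the quotient of $\C_{C_p}$ by negligible morphisms, is exactly the construction of Section~\ref{Frobe}. However, the rest of your proposal diverges from what the paper does, and a key ingredient is missing. You suggest ``iterating this twist'' until it ``stabilizes''; the paper does nothing of the sort, and it is not clear that iteration does anything useful here (note $\Fr$ changes the target category at each step). What the paper actually does is a case analysis on the \emph{Frobenius type} of a minimal counterexample $\C$ (chosen by Frobenius--Perron dimension, so that any tensor functor out of $\C$ is automatically injective by Corollary~\ref{mininj}). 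Frobenius injectivity of type $\Vec$ forces $x \mapsto x^p$ to be a bijection on $K(\C)\ot\k$, hence that ring has no nilpotents and is semisimple, hence $\C$ is \emph{non-degenerate} in the sense of \cite{ENO} (Proposition~\ref{semndg}). The crucial step you omit entirely is what happens next: non-degenerate symmetric fusion categories \emph{lift to characteristic zero} via Witt vectors, and Deligne's Theorem~\ref{p=0} is then applied over the fraction field and descended back down (Proposition~\ref{ndgfib}). This lifting to characteristic zero is the engine of the whole proof; without it there is no way to conclude. The remaining Frobenius types ($\sVec$, $\Ver_p^+$, $\Ver_p$) are handled by reduction to the type-$\Vec$ case via gradings (Lemma~\ref{grim}) and Lemma~\ref{last}, again terminating in the characteristic-zero lifting.

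On the harder, non-semisimple half of the conjecture, your intuitions (that the Frobenius construction must be extended beyond semisimple categories, that negligible/phantom objects are the obstruction, and that some filtration with semisimple associated graded plus a lifting argument would be needed) are reasonable speculation and roughly consistent with the paper's own remarks that a more general definition of the Frobenius functor is expected but not yet available. But none of this is a proof, and you should not present it as one: the conjecture remains open. Finally, you gesture at uniqueness via ``universality of $\Ver_p$,'' but the paper proves nothing about uniqueness even in the semisimple case, and it is not clear what precise universal property you intend to invoke; this part of your proposal is unsupported.

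Summary: your identification of the Frobenius functor as the central new tool is correct, but you miss the lifting of non-degenerate fusion categories to characteristic zero, which is the decisive ingredient in the paper's proof of the semisimple case, and your ``iterate and stabilize'' mechanism is not what the paper does and has no evident justification.
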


\begin{remark} We refer the reader to \cite{Ds} for examples of pre-Tannakian categories 
which are not of subexponential growth.
We note that no such examples are currently known in the case $p>0$.
\end{remark}

In view of Example \ref{lexam} Conjecture \ref{mconj} states that in the case $p=2$ any pre-Tannakian category of subexponential
growth admits a fiber functor and in the case $p=3$ any pre-Tannakian category of subexponential growth admits a super fiber
functor. Thus Conjecture \ref{mconj} predicts that for $p=2$  any pre-Tannakian category of subexponential
growth is of the form $\Rep_\k(G)$ for a suitable affine group scheme $G$,  and for $p=3$ any pre-Tannakian category of subexponential growth is of the form $\Rep_\k(G,\eps)$ for a suitable affine super group scheme $G$.

\subsection{} We recall (see \cite{ENO}) that a {\em fusion category} is a $\k-$linear semisimple
rigid monoidal category with finite dimensional $\Hom-$spaces, finitely many isomorphism classes of simple objects, and
simple unit object. In particular, a symmetric fusion category (that is a fusion category equipped with a  
symmetric braiding) is the same as semisimple pre-Tannakian category with finitely many isomorphism 
classes of simple objects. It is not difficult to see that a fusion category is of subexponential growth,
see \cite[Lemme 4.8]{Dte}.
Thus the following statement which is the main result of this paper is a special case of
Conjecture \ref{mconj}:

\begin{theorem}\label{main}
 Let $p>0$. A symmetric fusion category $\C$ admits a $\k-$linear symmetric tensor functor $\C \to \Ver_p$.
\end{theorem}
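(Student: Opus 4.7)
The plan is to construct $F : \C \to \Ver_p$ by induction on the Frobenius--Perron dimension $\FPdim(\C)$, using a Tannakian-style reduction together with a base case analysis that should exhibit $\Ver_p$ as the universal target for the ``non-representation-theoretic'' part of $\C$.

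First I would locate the maximal Tannakian subcategory $\T \subset \C$; by the classical Tannakian reconstruction theorem, $\T$ is of the form $\Rep_\k(G)$ for a finite group scheme $G$ over $\k$. If $\T = \C$, then the forgetful functor $\Rep_\k(G) \to \Vec$ composed with the canonical embedding $\Vec \hookrightarrow \Ver_p$ supplies the desired $F$. Otherwise I would perform de-equivariantization along $A = \O(G)$: the category $\C_A$ of $A$-modules in $\C$ is a $G$-crossed braided fusion category with $\C = (\C_A)^G$, whose neutral component $(\C_A)_e$ is a symmetric fusion category of strictly smaller Frobenius--Perron dimension, and which by maximality of $\T$ contains no nontrivial Tannakian subcategory. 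Applying the induction hypothesis to $(\C_A)_e$ and then extending $G$-equivariantly---using that any $G$-action on $\Ver_p$ must be trivial so equivariantization poses no obstruction---should yield the required functor $\C \to \Ver_p$.

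The main obstacle is the base case: when $\C$ has no nontrivial Tannakian subcategory, one must construct $F : \C \to \Ver_p$ directly. Here I expect to combine an analysis of the categorical dimensions of the simple objects of $\C$---which in the absence of Tannakian content should be forced to lie in the prime subfield $\F_p \subset \k$---with M\"uger's centralizer theory in the symmetric setting, and with the universal property of $\Ver_p$ arising from its realization as the semisimplification of $\Rep_\k(\BZ/p\BZ)$. The goal is to embed $\C$ as a symmetric tensor subcategory of $\Ver_p$. This base case is where characteristic $p$ manifests most sharply, and where the bulk of the technical work lies: understanding what a symmetric fusion category can look like when it is ``minimal'' in the sense of having trivial Tannakian content, and showing that any such category must coincide with a fusion subcategory of $\Ver_p$.
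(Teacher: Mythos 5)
Your proposal takes a genuinely different route from the paper. You envision an induction on $\FPdim$ driven by de-equivariantizing a maximal Tannakian subcategory; the paper instead constructs a canonical symmetric tensor \emph{Frobenius functor} $\Fr : \C \to \C^{(1)}\bt\Ver_p$ (built from the $p$-th tensor power, $C_p$-equivariantization, and passage to the quotient by negligible morphisms), reduces to the case where $\Fr$ is fully faithful by choosing a counterexample of minimal $\FPdim$, and then uses the identity $[\Fr(X)]=[X]^p$ in $K(\C)\ot\k$ to prove non-degeneracy of the relevant subcategory, so that lifting to characteristic $0$ and Deligne's theorem can be invoked. There is no induction on Tannakian content anywhere in the paper.

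However, there is a genuine gap, concentrated exactly where you yourself flag ``the bulk of the technical work'': the base case. You propose that a symmetric fusion category with no nontrivial Tannakian subcategory should embed as a fusion subcategory of $\Ver_p$. This is false. Take $p=5$ and $\C = \Ver_5^+\bt\Ver_5^+$. Its simple objects are $\bL_r\bt\bL_s$ with $r,s\in\{1,3\}$; the only one with integer Frobenius--Perron dimension is the unit object, so $\C$ has no nontrivial Tannakian subcategory. But $\FPdim(\C)=(2+\phi)^2=5+5\phi$ while $\FPdim(\Ver_5)=2(2+\phi)=4+2\phi$ (where $\phi=(1+\sqrt5)/2$), so $\FPdim(\C)>\FPdim(\Ver_5)$ and by Corollary \ref{mininj} there is no injective symmetric tensor functor $\C\to\Ver_5$. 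The functor the theorem produces in such examples is genuinely non-injective, and the tools you invoke will not produce it: M\"uger centralizer theory trivializes in the symmetric setting, and the ``universal property of $\Ver_p$'' you wish to appeal to is essentially the statement of the theorem itself. The missing idea is precisely the paper's Frobenius functor, which produces a symmetric tensor functor out of \emph{any} $\C$ landing in a category that already involves $\Ver_p$, together with the quantitative control $[\Fr(X)]=[X]^p$ (Corollary \ref{FrFr}) that forces semisimplicity of $K(\C)\ot\k$ and hence non-degeneracy via Proposition \ref{semndg}.

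There are also unverified claims in the inductive step: that the neutral component of the de-equivariantization has no nontrivial Tannakian subcategory ``by maximality of $\T$,'' and that $G$-equivariant extension of a functor to $\Ver_p$ ``poses no obstruction'' because any $G$-action on $\Ver_p$ is trivial. The first needs an argument and, even if granted, only returns you to the ill-posed base case. The second conflates triviality of the group of tensor autoequivalences of $\Ver_p$ with the existence of a $G$-equivariant structure on a given functor $(\C_A)_e\to\Ver_p$; the latter is additional data whose existence must be established, and without it one cannot descend to a functor out of $\C=((\C_A)_e)^G$.
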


We note that Theorem \ref{main} holds true also for $p=0$ if we set $Ver_0=\sVec$ by Theorem \ref{p=0}. Using \cite[Th\'eor\`eme 8.17]{Dta} we get the following

\begin{corollary}\label{grsch}
 A symmetric fusion category is of the form $\Rep_{\Ver_p}(G,\eps)$ where $G$ 
 is a finite group scheme in the category $\Ver_p$.
\end{corollary}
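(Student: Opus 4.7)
The plan is to apply Theorem \ref{main} together with Deligne's Tannakian reconstruction theorem [Dta, Théorème 8.17]. By Theorem \ref{main}, there exists a $\k$-linear symmetric tensor functor $F: \C \to \Ver_p$; since $\C$ is semisimple, any additive functor out of $\C$ is automatically exact, so $F$ is a $\k$-linear exact symmetric tensor functor, i.e.\ a fiber functor to $\Ver_p$ in the sense of [Dta, §8].

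Invoking [Dta, Théorème 8.17] with base $\E = \Ver_p$, one obtains an affine group scheme $G$ in $\Ver_p$ together with a homomorphism $\eps: \pi \to G$ from the fundamental group $\pi$ of $\Ver_p$ satisfying the compatibility described in Example \ref{gexa}(iv), such that $\C \simeq \Rep_{\Ver_p}(G, \eps)$ with $F$ corresponding to the forgetful functor.

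It remains to verify that $G$ is a \emph{finite} group scheme in $\Ver_p$. In the Tannakian reconstruction, the coordinate algebra $\O(G)$ is given by a coend
\[
\O(G) \;=\; \int^{X \in \C} F(X)^\vee \ot F(X),
\]
taken a priori in $\Ind(\Ver_p)$. Because $\C$ is a fusion category with only finitely many isomorphism classes of simple objects $X_1, \ldots, X_n$, this coend collapses to the finite direct sum $\bigoplus_{i=1}^n F(X_i)^\vee \ot F(X_i)$, which already lies in $\Ver_p$ itself and is a finite length object there. Hence $\O(G)$ has finite length in $\Ver_p$, so $G$ is a finite group scheme.

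The substance of the corollary rests entirely on Theorem \ref{main}; the present argument is a routine application of Tannakian reconstruction over a non-classical base. The only point requiring a moment of attention is the finiteness of $G$, but this reduces immediately to the finiteness of the set of simples of $\C$ via the coend description of $\O(G)$, so I do not expect any serious obstacle beyond invoking Theorem \ref{main} itself.
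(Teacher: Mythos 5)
Your argument is correct and is precisely the route the paper takes: produce a $\k$-linear symmetric tensor functor $\C \to \Ver_p$ via Theorem \ref{main}, observe that semisimplicity of $\C$ makes it exact (and rigidity makes it faithful), and then invoke Deligne's reconstruction \cite[Th\'eor\`eme 8.17]{Dta} over the base $\Ver_p$. The paper states the corollary without writing out the finiteness of $G$; your observation that the coend $\bigoplus_i F(X_i)^\vee \ot F(X_i)$ has finite length in $\Ver_p$ because $\C$ has finitely many simples is a correct and worthwhile fleshing-out of that point.
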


A well known {\em Nagata's theorem} (\cite[IV, 3.6]{DG}) gives a classification of finite group schemes $G$ such that $\Rep_\k(G)$
is semisimple; thus Corollary \ref{grsch} yields a classification of symmetric fusion categories in the case $p=2$. Namely, any such category is an equivariantization (see \cite[Section 4]{DGNO}) 
of a pointed category associated with a 2-group (see e.g. \cite[8.4]{EGNO}) by the action of 
a group of odd order. It is natural to ask

\begin{question} What is classification of finite group schemes $G$ in $\Ver_p$ such that 
$\Rep_{\Ver_p}(G)$ or $\Rep_{\Ver_p}(G,\eps)$ is semisimple?
\end{question}

\subsection{} The main ingredient in the proof of Theorem \ref{main} is the notion of {\em Frobenius functor}
which is an abstract version of the pullback functor under the Frobenius morphism from a group scheme to itself.
The definition of this functor is given in Section \ref{Frobe}, and it works only in the case of semisimple pre-Tannakian categories.
We expect that a similar definition can be given more generally and hope to address this issue in future publications.

Another essential tool in the proof of Theorem \ref{main} is the theory of 
{\em non-degenerate fusion categories} developed in \cite[Section 9]{ENO}. A crucial
property of such categories is that they can be lifted to characteristic zero, see {\em loc. cit.}

\subsection{Acknowledgements} It is my great pleasure to thank Pierre Deligne, 
Pavel Etingof, Michael
Finkelberg, Shlomo Gelaki, Alexander Kleshchev, Dmitri Nikshych, Julia Pevtsova, Alexander Polishchuk, and Vadim Vologodsky for very useful conversations.

\section{Preliminaries}
For a tensor category $\C$ we will denote by $\be$ its unit object. For a braided
(in particular, symmetric) tensor category $\C$ we will denote by $c_{X,Y}$ the braiding
morphism $X\ot Y\to Y\ot X$.
For an abelian category $\C$ we will denote by $\O(\C)$
the set of isomorphism classes of simple objects of $\C$.

\subsection{Fusion categories}\label{fgen}
 The definition of fusion category was given in introduction. 
A {\em fusion subcategory} of a fusion category $\C$ is  a full tensor subcategory $\C'\subset \C$
such that if $X\in \C$ is isomorphic to a direct summand of an object of $\C'$ then $X\in \C'$,
see \cite[2.1]{DGNO}. For a collection $S$ of objects of $\C$ there is a smallest fusion subcategory containing
$S$; it is called a fusion subcategory generated by $S$.

A tensor functor $F$ between fusion categories $\C$ and $\D$ is called {\em injective} if it is fully faithful; such a functor is called {\em surjective} if any object of $\D$ is isomorphic to a direct summand of $F(X), X\in \C$, see \cite[5.7]{ENO}. Thus a tensor functor is an equivalence 
if and only if it is both injective and surjective.

For a tensor functor $F: \C \to \D$ its {\em image} $F(\C)$
is the fusion subcategory of $\D$ generated by objects $F(X), X\in \C$. 

Let $G$ be a finite group. A {\em $G-$grading} on a fusion category $\C$ is a function
$\phi : \O(\C)\to G$ such that for $X,Y\in \O(\C)$ the tensor product $X \ot Y$ contains
only simple summands $Z$ with $\phi(Z)=\phi(X)\phi(Y)$, see e.g \cite[2.3]{DGNO}; 
such a grading is {\em faithful}
if the function $\phi$ is surjective. It is clear that direct sums of simple objects $X$ with
$\phi(X)=1\in G$ form a fusion subcategory of $\C$; this is {\em neutral component} of the
grading.

\subsection{External tensor product} Let $\C$ and $\D$ be two $\k-$linear
tensor categories. We define category $\C \times_\k \D$ as follows: objects
are pairs $(X,Y)$ where $X\in \C$ and $Y\in \D$ and morphisms are
$\Hom((X_1,Y_1),(X_2,Y_2))=\Hom_\C(X_1,X_2)\ot_\k \Hom(Y_1,Y_2)$.
The category $\C \times_\k \D$ has an obvious structure of $\k-$linear tensor category
with tensor product given by $(X_1,Y_1)\ot (X_2,Y_2):=(X_1\ot X_2,Y_1\ot Y_2)$.
If $\C$ and $\D$ are symmetric tensor categories the so is $\C \times_\k \D$.

We define {\em external tensor product} $\C \bt \D$ to be the {\em Karoubian
envelope} (see e.g. \cite[1.8]{Ds})  of $\C \times_\k \D$; the image of pair $(X,Y)\in \C \times_\k \D$ in
$\C \bt \D$ will be denoted $X\bt Y$. We have an obvious tensor functors $\C \to \C \bt \D$,
$X\mapsto X\bt \be$ and $\D \to \C \bt \D$, $Y\mapsto \be \bt Y$. If $\C, \D, \A$ are symmetric
$\k-$linear Karoubian tensor categories we have the following universal property of the category
$\C \bt \D$:

(a) the functor assigning to $F: \C \bt \D \to \A$ its composition with functors $\C \to \C \bt \D$
and $\D \to \C \bt \D$ is an equivalence of categories:

\{ $\k-$linear symmetric tensor functors $\C \bt \D \to \A$\} $\to$ \{ pairs of $\k-$linear 
symmetric tensor functors $\C \to \A$ and $\D \to \A$\}

In general the category $\C \bt \D$ is not abelian even if $\C$ and $\D$ are. However if 
$\C$ and $\D$ are abelian and one of these categories is semisimple then $\C \bt \D$
is also abelian (say if $\C$ is semisimple then $\C \bt \D$ is equivalent to direct sum
of copies of $\D$ indexed by the isomorphism classes of simple objects in $\C$ as an
additive category).  

\begin{example} \label{equi}
Let $\C$ be a semisimple pre-Tannakian category and let $G$ be a finite group.
Let $\C_G$ be the equivariantization of $\C$ with respect to the trivial action of $G$ on $\C$,
see \cite[4.1.3]{DGNO}. In other words the objects of $\C_G$ are objects of $\C$ equipped with
$G-$action; the morphisms are morphisms in $\C$ commuting with $G-$action, and the tensor
product is obvious. We have the following symmetric tensor functors:
$$\C \to \C_G, X\mapsto (X,\; \mbox{trivial action of}\; G),$$
$$\Rep_k(G)\to \C_G, V\mapsto (V\ot \be , G \; \mbox{acts on first factor}).$$ 
Thus by the universal property (a) we have a symmetric tensor functor
$\C \bt \Rep_\k (G)\to \C_G$. We leave it to the reader to check that this functor
is an equivalence.
\end{example}

For the future use we will record the following result:

\begin{lemma} \label{AA}
Let $\C$ be a symmetric fusion category and let $\A_1,\A_2\subset \C$ be two fusion subcategories.
Assume that the only simple object $X\in \C$ satisfying $X\in \A_1$ and $X\in \A_2$ is $X=\be$.
Then fusion subcategory $\langle \A_1,\A_2\rangle$ of $\C$ generated by (the objects of) $\A_1$ and $\A_2$ is
equivalent to $\A_1\bt \A_2$ as a symmetric tensor category.
\end{lemma}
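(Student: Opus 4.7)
The plan is to use the universal property (a) of the external tensor product to construct a symmetric tensor functor $F: \A_1 \bt \A_2 \to \C$ coming from the two inclusions $\A_1 \hookrightarrow \C$ and $\A_2 \hookrightarrow \C$, and then to show that $F$ is fully faithful with image $\langle \A_1, \A_2 \rangle$. Since $\A_1$ and $\A_2$ are fusion categories, by the discussion preceding Example \ref{equi} the category $\A_1 \bt \A_2$ is a semisimple abelian symmetric tensor category whose simple objects are precisely the products $X \bt Y$ with $X \in \O(\A_1)$, $Y \in \O(\A_2)$; so to verify that $F$ is fully faithful, it suffices to compute morphism spaces on the subcategory $\A_1 \times_\k \A_2$.

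Surjectivity of $F$ onto $\langle \A_1, \A_2 \rangle$ is immediate: $F(X \bt \be) = X$ and $F(\be \bt Y) = Y$ exhibit $\A_1$ and $\A_2$ inside the image, and conversely every object $X \ot Y$ with $X \in \A_1$, $Y \in \A_2$ belongs to $\langle \A_1, \A_2\rangle$ by definition. For full faithfulness, take $X_1, X_2 \in \A_1$ and $Y_1, Y_2 \in \A_2$ and use rigidity together with the symmetric braiding of $\C$ to rewrite
$$\Hom_\C(X_1 \ot Y_1, X_2 \ot Y_2) \cong \Hom_\C(\be, X_1^* \ot X_2 \ot Y_1^* \ot Y_2).$$
Decomposing $X_1^* \ot X_2 = \bigoplus_i a_i A_i$ in $\A_1$ and $Y_1^* \ot Y_2 = \bigoplus_j b_j B_j$ in $\A_2$, the question is reduced to computing $\Hom_\C(\be, A_i \ot B_j) \cong \Hom_\C(B_j^*, A_i)$ for simple $A_i \in \A_1$ and simple $B_j^* \in \A_2$.

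Here the hypothesis enters: since $A_i$ and $B_j^*$ are both simple in $\C$, a nonzero map between them is an isomorphism, so $A_i \cong B_j^*$ is a simple object lying in both $\A_1$ and $\A_2$ and hence equals $\be$ by assumption. Therefore $\Hom_\C(\be, A_i \ot B_j) = \k$ when $A_i = B_j = \be$ and vanishes otherwise, yielding
$$\Hom_\C(X_1 \ot Y_1, X_2 \ot Y_2) = \Hom_{\A_1}(X_1, X_2) \ot_\k \Hom_{\A_2}(Y_1, Y_2),$$
which is exactly the morphism space in $\A_1 \times_\k \A_2$, and hence in $\A_1 \bt \A_2$. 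Thus $F$ is fully faithful; combined with surjectivity onto $\langle \A_1, \A_2 \rangle$ and semisimplicity of both sides, $F$ induces the required equivalence of symmetric tensor categories.

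The main obstacle is the $\Hom$ computation in the second paragraph; everything else is essentially formal from the universal property. The key conceptual point is that the ``disjointness'' hypothesis on $\A_1 \cap \A_2$ at the level of simple objects is exactly what is needed to force $\Hom_\C(A, B) = 0$ for non-isomorphic simples $A \in \A_1$, $B \in \A_2^*$, which in turn is equivalent to the simples $X \bt Y$ of the would-be external tensor product remaining simple and pairwise non-isomorphic inside $\C$.
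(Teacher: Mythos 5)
Your proof is correct and follows essentially the same strategy as the paper's: construct the functor $\A_1 \bt \A_2 \to \langle \A_1, \A_2 \rangle$ via the universal property, note surjectivity is immediate, and establish full faithfulness by using rigidity and the braiding to reduce the $\Hom$ computation to the observation that the only simple object common to $\A_1$ and $\A_2$ is $\be$. The paper does this slightly more tersely by checking only on simple objects $X_i \in \O(\A_1)$, $Y_i \in \O(\A_2)$ and writing the adjunction in one step as $\Hom(X_1 \ot Y_1, X_2 \ot Y_2) = \Hom(X_1 \ot X_2^*, Y_1^* \ot Y_2)$, but the content is identical to your more expanded computation.
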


\begin{proof} We have obvious symmetric tensor functors $\A_1, \A_2\to \langle \A_1,\A_2\rangle$;
thus by the universal property (a) we have a symmetric tensor functor $\A_1\bt \A_2\to
\langle \A_1,\A_2\rangle$ sending $X\bt Y$ to $X\ot Y$. This functor is clearly surjective, so we
just need to show that it is injective. Any simple object of $\A_1\bt \A_2$ is of the form
$X\bt Y$ where $X\in \O(\A_1)$ and $Y\in \O(\A_2)$. For two such objects $X_1\bt Y_1$ 
and $X_2\bt Y_2$ we have
$$\Hom(X_1\ot Y_1, X_2\ot Y_2)=\Hom(X_1\ot X_2^*, Y_1^*\ot Y_2)=\left\{ 
\begin{array}{cc} \k &\mbox{if}\; X_1=X_2\; \mbox{and}\; Y_1=Y_2\\ 0&\mbox{otherwise}
\end{array}\right.
$$
since $X_1\ot X_2^*\in \A_1$ and $Y_1^*\ot Y_2\in \A_2$ which proves the injectivity.
\end{proof}

\begin{remark} Lemma \ref{AA} extends trivially to the case when $\C$ is a semisimple 
pre-Tannakian category. On the other hand the condition that $\C$ is symmetric can not
be dropped. Namely if $\C$ is braided we still have an equivalence 
$\A_1\bt \A_2\simeq \langle \A_1,\A_2\rangle$ of monoidal categories but it is not
necessarily braided. If $\C$ is not braided then even a functor $\A_1\bt \A_2\to \langle \A_1,\A_2\rangle$ can not be defined in general.
\end{remark}

\subsection{Frobenius-Perron dimension}\label{FPdim}
For an abelian tensor category $\C$ with exact tensor product we will denote by $K(\C)$ its Grothendieck ring, see e.g.
\cite[4.5]{EGNO}. Class of an object $X\in \C$ in $K(\C)$ will be denoted $[X]$. We recall that for a fusion category $\C$
there is a unique ring homomorphism $\FPdim: K(\C)\to \BR$ called {\em Frobenius-Perron dimension} such that
$\FPdim(X):=\FPdim([X])>0$ for any $0\ne X\in \C$, see \cite[4.5]{EGNO}. This definition implies that $\FPdim(X)\ge 1$ for
any $X\ne 0$, see \cite[Proposition 3.3.4]{EGNO}.

Recall that {\em Frobenius-Perron dimension} $\FPdim(\C)$ of a fusion category $\C$ is defined as
$$\FPdim(\C)=\sum_{X\in \O(\C)}\FPdim(X)^2.$$

It is easy to see that for any $M\in \BR$ the set 
$$\{ x\in \BR | x<M\; \mbox{and there exists a fusion category}\; \C \; \mbox{with}\; x=\FPdim(\C)\}$$
is finite. In particular any nonempty set of fusion categories has an element $\C$ with minimal possible $\FPdim(\C)$.

We have the following result:

\begin{lemma}[\cite{EGNO} Propositions 6.3.3 and 6.3.4] Let $F:\C \to \D$ be a tensor functor between fusion categories.

(i) If $F$ is injective then $\FPdim(\C)\le \FPdim(\D)$ and we have equality if and only if $F$ is an equivalence;

(ii) If $F$ is surjective then $\FPdim(\C)\ge \FPdim(\D)$  and we have equality if and only if $F$ is an equivalence.
\end{lemma}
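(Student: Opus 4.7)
The plan is to treat parts (i) and (ii) separately, using in both cases the uniqueness of $\FPdim$ as a positive ring homomorphism, and in (ii) a Perron-Frobenius argument applied to the image of the regular element.

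For (i), I would observe that since $F$ is fully faithful and $\End(X)=\k$ for every simple $X$, $F$ sends simples to simples and nonisomorphic simples to nonisomorphic simples, hence embeds $\O(\C)$ into $\O(\D)$. The composition $\FPdim_\D \circ F_*\colon K(\C)\to\BR$ is a ring homomorphism positive on nonzero objects, so by the uniqueness characterization of $\FPdim_\C$ we must have $\FPdim_\D(F(X)) = \FPdim_\C(X)$ for every $X$. The defining sum for $\FPdim(\C)$ is then a partial sum of the one for $\FPdim(\D)$, giving the inequality and its equality case (since equality forces $\O(\C)\to\O(\D)$ to be a bijection and hence $F$ to be an equivalence).

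For (ii), I would set $E := F_*(R_\C)\in K(\D)\otimes\BR$ with $R_\C = \sum_{X\in\O(\C)}\FPdim_\C(X)[X]$. Surjectivity says every $Y\in\O(\D)$ is a summand of some $F(X)$, so every coefficient $c_Y$ of $E$ in the basis $\O(\D)$ is strictly positive. The identity $R_\C^2 = \FPdim(\C)R_\C$ together with $F_*$ being a ring homomorphism gives $E^2 = \FPdim(\C)E$, while the identity $[Y]R_\D = \FPdim_\D(Y)R_\D$ (which follows from the reciprocity $N^W_{Y,Z} = N^Z_{Y^*,W}$ and $\FPdim(Y^*)=\FPdim(Y)$) gives $E\cdot R_\D = \FPdim(\C)R_\D$. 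Thus both $E$ and $R_\D$ are strictly positive eigenvectors of left multiplication by $E$ on $K(\D)\otimes\BR$, with the same eigenvalue $\FPdim(\C)$. A quick check (for any $Y',Y''\in\O(\D)$, $N^{Y'}_{Y,Y''}>0$ whenever $Y$ is a simple summand of $Y'\otimes (Y'')^*$, so every matrix entry $\sum_Y c_Y N^{Y'}_{Y,Y''}$ is positive) shows this operator is represented by a strictly positive matrix, and then Perron-Frobenius forces $E = \lambda R_\D$; applying $\FPdim_\D$ determines $\lambda = \FPdim(\C)/\FPdim(\D)$.

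To finish, I would compare the coefficient of $\be$ on both sides of $E = \lambda R_\D$: the right side yields $\lambda$, and the left side yields $\sum_{X\in\O(\C)}\FPdim_\C(X)\,[F(X):\be] \ge 1$ from the $X=\be$ term alone. Hence $\FPdim(\C)\ge\FPdim(\D)$. Equality would force $[F(X):\be]=0$ for every simple $X\ne\be$, and then the adjunction $\Hom_\D(F(X),F(X')) = \Hom_\D(\be,F(X^*\otimes X'))$ together with the decomposition of $X^*\otimes X'$ (in which $\be$ appears with multiplicity $\delta_{X,X'}$) forces $\dim\Hom_\D(F(X),F(X')) = \delta_{X,X'}$, making $F$ fully faithful and hence an equivalence. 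The main obstacle will be the Perron-Frobenius step, specifically verifying strict positivity of the multiplication matrix; the remaining manipulations with the regular element should be routine.
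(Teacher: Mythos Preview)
The paper does not supply a proof of this lemma at all: it is simply quoted with a citation to \cite{EGNO}, Propositions 6.3.3 and 6.3.4, and the text moves directly to Corollary~\ref{mininj}. So there is no ``paper's own proof'' to compare against.

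Your argument is correct and is essentially the standard proof one finds in the cited reference. A couple of remarks on points you flagged as potential obstacles:

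For the Perron--Frobenius step, your verification of strict positivity is fine: given $Y',Y''\in\O(\D)$, any simple summand $Y$ of $Y'\otimes (Y'')^*$ satisfies $N^{Y'}_{Y,Y''}=\dim\Hom(Y,Y'\otimes (Y'')^*)>0$, and since every $c_Y>0$ by surjectivity, the matrix entry $\sum_Y c_Y N^{Y'}_{Y,Y''}$ is strictly positive. Once the matrix is strictly positive, the Perron--Frobenius theorem indeed guarantees that the positive eigenvector is unique up to scalar, so $E$ and $R_\D$ are proportional.

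One small implicit step worth making explicit: in computing $E\cdot R_\D=\FPdim_\D(E)\,R_\D$ you need $\FPdim_\D(E)=\FPdim(\C)$, which in turn uses $\FPdim_\D(F(X))=\FPdim_\C(X)$. This holds because a tensor functor between fusion categories is exact and faithful, so $\FPdim_\D\circ F_*$ is a ring homomorphism taking positive values on nonzero classes, hence equals $\FPdim_\C$ by uniqueness. You used this in part (i) but should note it is also needed here.

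The equality case in (ii) is handled cleanly; the adjunction computation showing $\dim\Hom_\D(F(X),F(X'))=\delta_{X,X'}$ is exactly right.
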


Since the functor $F: \C \to F(\C)$ is surjective we have the following

\begin{corollary} \label{mininj}
For a tensor functor $F:\C \to \D$ between fusion categories we have 
$\FPdim(F(\C))\le \FPdim(\C)$ and we have equality if and only if $F$ is injective.
\end{corollary}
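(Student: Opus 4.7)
The plan is to apply part (ii) of the cited lemma to the canonical factorization of $F$ through its image. By the definition of $F(\C)$ as the fusion subcategory of $\D$ generated by the objects $F(X)$, every object of $F(\C)$ is isomorphic to a direct summand of some $F(X)$, so the induced functor $F' : \C \to F(\C)$ is surjective in the sense of Section \ref{fgen}. The inclusion $\iota : F(\C) \hookrightarrow \D$ is fully faithful, hence injective, and $F = \iota \circ F'$.

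Applying the lemma's part (ii) to $F'$ yields at once $\FPdim(\C) \ge \FPdim(F(\C))$, with equality if and only if $F'$ is an equivalence of fusion categories. It remains to identify that equality condition with injectivity of $F$ itself. Since $F'$ is already surjective, $F'$ is an equivalence precisely when it is also injective, i.e., fully faithful. Because $F(\C)$ sits in $\D$ as a full subcategory (fusion subcategories are full by definition), $F' : \C \to F(\C)$ is fully faithful if and only if the composite $F = \iota \circ F' : \C \to \D$ is fully faithful, i.e., if and only if $F$ is injective. Combining these observations gives the corollary.

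I do not anticipate any real obstacle here: the statement is essentially a direct rephrasing of part (ii) of the preceding lemma via the image factorization, together with the trivial observation that injectivity of a tensor functor is preserved and reflected under replacing the target by a full subcategory containing the image.
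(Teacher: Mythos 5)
Your proposal is correct and matches the paper's approach exactly: the paper justifies the corollary with the single remark that $F : \C \to F(\C)$ is surjective and then applies part (ii) of the cited lemma. Your write-up simply spells out the factorization $F = \iota \circ F'$ and the observation that fullness/faithfulness of $F'$ is equivalent to that of $F$ because the inclusion $\iota$ is fully faithful, which is implicit in the paper.
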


\subsection{Non-degenerate fusion categories}\label{ndeg}
Let $\C$ be a $\k-$linear rigid tensor category such that $\k \to \End(\be)$ is
an isomorphism.
We recall that a {\em pivotal structure} on $\C$ is a functorial tensor
isomorphism $X\simeq X^{**}$ for any $X\in \C$, see \cite{BW} or \cite[4.7]{EGNO}. 
Such a structure allows to define the left
and right traces of any morphism $a: X\to X$, see {\em loc. cit}. A pivotal structure is called 
{\em spherical} if for any morphism $a: X\to X$ its left trace equals right trace, so the notion
of trace is unambiguous. In particular we can defined {\em dimension} $\dim(X)\in \k$ of any object 
$X$ as a trace of the identity morphism. If $\C$ is abelian the dimension determines a ring homomorphism
$\dim: K(\C)\to \k$ sending $[X]$ to $\dim(X)$. In particular this discussion applies in the case
when $\C$ is symmetric, since for such categories we have a canonical choice of spherical
structure given by
$$X \xrightarrow{\id_X\ot \coev_{X^*}} X\ot X^*\ot X^{**}\xrightarrow{c_{X,X^*}\ot \id_{X^{**}}} 
X^*\ot X\ot X^{**}\xrightarrow{\ev_X\ot \id_{X^{**}}} X^{**},$$
see e.g. \cite[Section 9.9]{EGNO}. This is the only spherical structure that is used in this paper.
Recall that (see e.g. \cite[Proposition 4.8.4]{EGNO}:

(a) if $\C$ is semisimple and $X\in \O(\C)$ then $\dim(X)\ne 0$.

A {\em spherical fusion category} is a fusion category equipped with a spherical structure.
For such category $\C$ one defines its {\em global dimension} $\dim(\C)\in \k$ via
$$\dim(\C)=\sum_{X\in \O(\C)}\dim(X)^2.$$

\begin{definition}[\cite{ENO} Definition 9.1] A spherical fusion category $\C$ is called {\em non-degenerate} if $\dim(\C)\ne 0$.
\end{definition}

\begin{remark} (i) In fact $\dim(\C)$ is independent of the choice of spherical structure.
Moreover, $\dim(\C)$ and the notion of non-degeneracy can be defined for a fusion
category without a reference to the spherical structures, see \cite[Definition 2.2]{ENO}.

(ii) It is known that for $p=0$ any fusion category is non-degenerate, see \cite[Theorem 2.3]{ENO}.
Thus this notion is of interest only for $p>0$.
\end{remark}

A crucial property of non-degenerate fusion categories is that they can be lifted to characteristic
zero, see \cite[Section 9]{ENO}. In particular we have the following

\begin{proposition}[\cite{EG} Theorem 5.4] \label{ndgfib}
Let $\C$ be a non-degenerate symmetric fusion category. 

(i) If $p=2$ then there exists a fiber functor $\C \to \Vec$;

(ii) If $p>2$ then there exists a super fiber functor $\C \to \sVec$.
\end{proposition}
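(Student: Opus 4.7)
The plan is to reduce the statement to Deligne's characteristic zero theorem (Theorem \ref{p=0}) by lifting $\C$ to characteristic zero. This is exactly the setting where the non-degeneracy hypothesis pays off, since \cite[Section 9]{ENO} guarantees that a non-degenerate fusion category admits a lift to a fusion category defined over a characteristic-zero field.

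First, I would apply the lifting theorem of \cite{ENO} to produce a symmetric fusion category $\tilde{\C}$ over the fraction field $K$ of the Witt vectors $W(\k)$ (enlarging $K$ if necessary) whose reduction modulo the maximal ideal recovers $\C$ as a symmetric fusion category. Here one must check that the symmetric braiding lifts together with the tensor structure; this is standard because the braiding is a discrete datum (an automorphism of $X\ot Y$) which lifts uniquely from the special fiber to the generic fiber once the deformation of $\C$ is fixed. Non-degeneracy ensures that $\dim(\C)$ is a unit, which is precisely what makes the deformation theory unobstructed.

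Second, since $\tilde{\C}$ is a fusion category over a field of characteristic zero, it is of subexponential growth by \cite[Lemme 4.8]{Dte}. Deligne's Theorem \ref{p=0} then provides a super fiber functor $\tilde{F}: \tilde{\C} \to \sVec_K$, and Tannakian reconstruction identifies $\tilde{\C}$ with $\Rep_K(\tilde{G}, \tilde\eps)$ for a finite super group scheme $\tilde{G}$ over $K$. I would then spread $\tilde{G}$ out to a finite flat super group scheme over $W(\k)$ and reduce modulo $p$ to obtain a finite super group scheme $G$ over $\k$ with $\Rep_\k(G,\eps) \simeq \C$; composition with the forgetful functor yields the desired super fiber functor $\C \to \sVec$ for $p > 2$. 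For $p = 2$, the symmetric structure on super vector spaces degenerates: the parity sign $(-1)^{|v|}$ equals $1$, so the $\BZ/2$-grading contributes trivially to the braiding and the target $\sVec$ coincides with $\Vec$ as a symmetric tensor category. Hence we obtain a genuine fiber functor $\C \to \Vec$.

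The main obstacle is the integral model step: one must ensure that the super group scheme $\tilde{G}$ over $K$ has a flat integral model over $W(\k)$ whose generic fiber carries $\tilde{F}$ and whose special fiber gives the functor we want. Controlling this requires knowing that the Tannakian description is compatible with the deformation produced by \cite{ENO}, i.e.\ that the two avenues for relating the generic and special fibers---reduction of the group scheme on the one hand, and the lifting theorem on the other---agree on $\C$. Once this compatibility is established, everything else is essentially formal: subexponential growth plus Deligne in characteristic zero, followed by reduction modulo $p$ and the specific simplification in characteristic $2$.
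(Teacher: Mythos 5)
Your overall strategy---lift to characteristic zero via the ENO/non-degeneracy machinery, apply Deligne's theorem there, and come back down---is the same as the paper's. But the step you yourself flag as ``the main obstacle'' is in fact where the paper uses non-degeneracy a second time, and without that input your ``spread out and reduce modulo $p$'' step genuinely does not close.

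Concretely: after Deligne identifies the generic fiber $\C_{W(\k)}\ot_{W(\k)}\F$ with $\Rep_\F(G,\eps)$, the paper observes that $\dim(\Rep_\F(G,\eps))=|G|$, and that this dimension lies in $W(\k)$ and reduces to $\dim(\C)\ne 0$ modulo $p$; hence $p\nmid |G|$. This has two consequences you are missing. First, $G$ is a \emph{constant} finite group, so there is no spreading-out problem at all: $\Rep(G,\eps)$ makes sense over $W(\k)$ and over $\k$ by base change, and $\Rep_\k(G,\eps)$ is automatically semisimple and non-degenerate. Second, since $\Rep_\F(G,\eps)$ is now visibly a lift of the non-degenerate fusion category $\Rep_\k(G,\eps)$, one can invoke the \emph{uniqueness} of liftings (\cite[Theorem 9.6]{ENO}) to conclude $\C\simeq\Rep_\k(G,\eps)$ directly, rather than trying to show that ``reduction of the group scheme'' and ``reduction of the ENO deformation'' agree---which is exactly the compatibility you acknowledge you cannot control. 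Without the $p\nmid |G|$ observation, the reduction $\Rep_\k(G,\eps)$ might well fail to be semisimple, and the two routes from generic to special fiber have no reason to produce the same category. Your remark about $p=2$ (that $\sVec=\Vec$ as a symmetric category since the Koszul sign is trivial) is a fine alternative to the paper's observation that $|G|$ odd forces $\eps=1$; but this is a side issue compared to the gap above.
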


\begin{proof} Let $W(\k)$ be the ring of Witt vectors of $\k$ and let $\F$ be its field of quotients.
Thus we have ring homomorphisms $W(\k)\to \k$ and $W(\k)\to \F$. 

By \cite[Corollary 9.4]{ENO} the category $\C$ has a lifting $\C_{W(\k)}$ to characteristic zero.
Thus $\C_{W(\k)}$ is a symmetric tensor category over $W(\k)$; its objects are the same as
objects of $\C$ and its morphisms are free $W(\k)-$modules and we have that 
$\C_{W(\k)} \ot_{W(\k)}\k \simeq \C$ and $\C_{W(\k)} \ot_{W(\k)}\F$ is a symmetric fusion category
over $\F$. It is easy to see that $\dim(\C_{W(\k)} \ot_{W(\k)}\F)\in W(\k)\subset \F$ and its image
in $\k$ equals $\dim(\C)$.

Thus by \cite[Corollaire 0.8]{Dte} we have an equivalence $\C_{W(\k)} \ot_{W(\k)}\F
\simeq \Rep_\F(G,\eps)$ for a suitable finite group $G$ and central element $\eps \in G$
of order $\le 2$, see Example \ref{gexa} (iii). Since $\dim(\Rep_\F(G,\eps))=|G|$ the non-degeneracy
of $\C$ forces that $|G|$ is not divisible by $p$. In particular, $\eps =1$ if $p=2$ and 
$\Rep_\F(G,\eps)$ is also a lifting of non-degenerate symmetric fusion category
$\Rep_\k(G,\eps)$ for any $p$. By \cite[Theorem 9.6]{ENO} we have an equivalence
of symmetric fusion categories $\C \simeq \Rep_\k(G,\eps)$ and we get the result by
using the forgetful functor $\Rep_\k(G,\eps) \to \sVec$ or $\Rep_\k(G,1)=\Rep_\k(G)\to \Vec$
in the case $p=2$.
\end{proof}

We will need the following criterion of non-degeneracy:

\begin{proposition} \label{semndg}
Let $\C$ be a spherical fusion category such that the ring $K(\C)\ot \k$
is semisimple. Then $\C$ is non-degenerate.
\end{proposition}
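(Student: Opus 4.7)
The plan is to build a distinguished central element of $K(\C)\otimes\k$ whose square is $\dim(\C)$ times itself, and then use semisimplicity to rule out $\dim(\C)=0$.

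First I would introduce the \emph{regular element}
$$R \;=\; \sum_{X\in \O(\C)}\dim(X)\,[X]\;\in\;K(\C)\otimes_{\BZ}\k.$$
By property (a) of Section~\ref{ndeg}, $\dim(X)\ne 0$ in $\k$ for every simple $X$, so $R$ is a nonzero element of $K(\C)\otimes\k$; note also that the dimension homomorphism sends $R$ to $\sum_X\dim(X)^2=\dim(\C)$. The key identity I would prove is that for every $Y\in\O(\C)$,
$$R\cdot[Y]\;=\;[Y]\cdot R\;=\;\dim(Y)\,R.$$
To check this, write $[X][Y]=\sum_{Z\in \O(\C)}N_{XY}^Z[Z]$ with $N_{XY}^Z=\dim\Hom(X\otimes Y,Z)=\dim\Hom(X,Z\otimes Y^*)$. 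The coefficient of $[Z]$ in $R\cdot[Y]$ is then
$$\sum_X \dim(X)\,N_{XY}^Z \;=\;\sum_X\dim(X)\dim\Hom(X,Z\otimes Y^*)\;=\;\dim(Z\otimes Y^*)\;=\;\dim(Z)\dim(Y),$$
using that the semisimple decomposition of $Z\otimes Y^*$ expresses its categorical dimension as the sum of $\dim(X)$ weighted by multiplicities, together with $\dim$ being a ring homomorphism and $\dim(Y^*)=\dim(Y)$ from sphericality. The analogous computation for $[Y]\cdot R$ uses $\dim(Y\otimes Z)=\dim(Y)\dim(Z)$.

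Since $R$ commutes with every basis element $[Y]$, it is central in $K(\C)\otimes\k$, and summing the identity with weights $\dim(Y)$ gives $R^2=\dim(\C)\cdot R$. Now suppose for contradiction that $\dim(\C)=0$. Then $R$ is a nonzero nilpotent element of the center $Z(K(\C)\otimes\k)$. But if $K(\C)\otimes\k$ is semisimple then so is its center, which is a finite-dimensional commutative $\k$-algebra and hence contains no nonzero nilpotents. This contradiction forces $\dim(\C)\ne 0$, i.e.\ $\C$ is non-degenerate.

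There is no real obstacle in the argument: the only genuine ingredient is the eigenvector identity $R\cdot[Y]=\dim(Y)R$, which is a direct consequence of the fact that categorical dimension is a ring homomorphism $K(\C)\to\k$ compatible with duality. Everything else is the observation that central nilpotents obstruct semisimplicity.
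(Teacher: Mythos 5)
Your proof is correct, but it takes a genuinely different route from the paper's. The paper forms the element $R=[\bigoplus_{Z\in\O(\C)}Z^*\ot Z]$ and compares the matrix of left multiplication by $R$ (in the basis of simples) with the matrix of the trace form $\Tr(xy)$ on $K(\C)\ot\k$; these differ only by a permutation of columns, so semisimplicity (via non-degeneracy of the trace form) makes $R$ invertible, hence $\dim(R)=\dim(\C)\neq 0$ since $\dim$ is a ring homomorphism. You instead work with the element $R'=\sum_{X\in\O(\C)}\dim(X)[X]$, verify the eigenvalue identity $[Y]\cdot R'=R'\cdot[Y]=\dim(Y)R'$, and conclude $R'^2=\dim(\C)R'$ with $R'$ central and nonzero (the latter by the fact that $\dim(X)\neq 0$ for simple $X$); semisimplicity then enters as "the center of a semisimple algebra has no nonzero nilpotents." Both arguments are short and both hinge on a standard characterization of semisimplicity, but they use different characterizations (non-degenerate trace form vs.\ no nilpotent central elements) and different distinguished elements of $K(\C)\ot\k$. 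Your element $R'$ is in fact the same one the paper uses in the proof of Lemma~\ref{grim} (it is $D_1$ for the trivial grading), so your argument nicely unifies the two computations; it is also slightly more self-contained, since it needs only the multiplicativity of $\dim$ and the self-duality $\dim(Y^*)=\dim(Y)$ supplied by sphericality, rather than an explicit identification of the trace form matrix. One small point worth stating explicitly: since the Proposition does not assume $\C$ is braided, $K(\C)\ot\k$ need not be commutative a priori, so the check that $R'$ is central (which you do carry out on both sides) is not vacuous.
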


\begin{proof} Let $\Tr(x)\in \k$ be the trace of the operator of left multiplication by $x\in K(\C)\ot \k$.
Since $K(\C)\ot k$ is semisimple the {\em trace form} $x,y\mapsto \Tr(xy)$ on $K(\C)\ot k$
is non-degenerate. For $X,Y \in \C$ we have a congruence modulo $p$:
$$\Tr([X][Y])\equiv \sum_{Z\in \O(\C)}\dim \Hom(X\ot Y\ot Z,Z)=\dim \Hom(X\ot Y, \oplus_{Z\in \O(\C)}
Z^*\ot Z).$$

Now consider an element $R=[\oplus_{Z\in \O(\C)}Z^*\ot Z)]\in K(\C)\ot \k$. In the basis
$[X], X\in \O(\C)$ the operator of left multiplication by $R$ has matrix entries 
$$\dim \Hom((\oplus_{Z\in \O(\C)}Z^*\ot Z)\ot X, Y)=\dim \Hom(X\ot Y^*, \oplus_{Z\in \O(\C)}
Z^*\ot Z).$$
Thus the matrix of this operator differs from the matrix of the trace form only by permutations
of columns. Thus under the assumptions of the Proposition this matrix is non-degenerate and
the element $R\in K(\C)\ot \k$ is invertible. Thus its image under the homomorphism 
$\dim : K(\C)\ot \k \to \k$ is nonzero. The result follows since 
$$\dim(R)=\dim (\oplus_{Z\in \O(\C)}Z^*\ot Z)=\sum_{Z\in \O(\C)}\dim(Z)^2=\dim(\C).$$
\end{proof}

\begin{remark} It seems reasonable to expect that conversely for a non-degenerate fusion
category $\C$ the ring $K(\C)\ot \k$ is semisimple.
\end{remark}

The following result is well known. However we did not find a reference, so a proof is included
for reader's convenience.

\begin{lemma}\label{grim}
 Let $\C$ be a faithfully $G-$graded spherical fusion category with neutral
component $\C_1$. Then
$$\dim(\C)=|G|\dim(\C_1).$$
\end{lemma}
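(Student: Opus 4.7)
For each $g \in G$, write $d(g) := \sum_{X \in \O(\C_g)} \dim(X)^2 \in \k$. Since $\dim(\C) = \sum_{g \in G} d(g)$ and $d(1) = \dim(\C_1)$, the lemma is equivalent to the assertion that $d(g) = d(1)$ for every $g \in G$, which is what I plan to prove.

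My strategy is to introduce, for each $g \in G$, the ``dimension element'' $\rho_g := \sum_{X \in \O(\C_g)} \dim(X) [X]$ in $K(\C_g) \ot \k$, and to compute the coefficient of $[W]$ in the product $\rho_g \rho_h$ for an arbitrary simple object $W \in \O(\C_{gh})$ in two different ways. This coefficient is the double sum $\sum_{X, Y} \dim(X) \dim(Y) N_{X, Y}^W$, where $N_{X,Y}^W = \dim \Hom(X \ot Y, W)$ and $(X, Y)$ runs over $\O(\C_g) \times \O(\C_h)$. On one hand, summing over $Y$ first and using the Frobenius adjunction $\Hom(X \ot Y, W) \cong \Hom(Y, X^* \ot W)$, the inner sum collapses via $\sum_Y \dim(Y) \dim \Hom(Y, X^* \ot W) = \dim(X^* \ot W) = \dim(X) \dim(W)$, yielding the coefficient $d(g) \dim(W)$. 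On the other hand, summing over $X$ first and using $\Hom(X \ot Y, W) \cong \Hom(X, W \ot Y^*)$ gives symmetrically the coefficient $d(h) \dim(W)$.

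Faithfulness of the grading guarantees that $\O(\C_{gh})$ is nonempty, and for any such $W$ the spherical structure of $\C$ ensures $\dim(W) \neq 0$ by property (a) recalled in Section \ref{ndeg}. Cancelling $\dim(W)$ in $\k$ therefore gives $d(g) = d(h)$ for every pair $g, h \in G$, and in particular $d(g) = d(1)$ for every $g$. Summing over $G$ then yields $\dim(\C) = |G| d(1) = |G| \dim(\C_1)$. I do not foresee any serious obstacle: the whole argument is a double-counting in the Grothendieck ring weighted by categorical dimensions, using only rigidity and the non-vanishing of dimensions of simples, both of which are already available from the hypotheses.
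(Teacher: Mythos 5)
Your argument is correct and is essentially the same as the paper's: both introduce the weighted sum $\rho_g = \sum_{X \in \O(\C_g)} \dim(X)[X]$ (called $D_g$ in the paper), exploit the Frobenius adjunction $\Hom(X\ot Y, W)\cong \Hom(Y, X^*\ot W)$ together with multiplicativity of $\dim$ to show that multiplying by $\rho_h$ scales by $\dim(X)$, and then cancel using the non-vanishing of dimensions of simple objects to conclude $d(g)=d(h)$. The paper establishes the identity $[X]D_h = \dim(X)D_{gh}$ and compares $D_gD_h$ both ways, while you compare coefficients of a single fixed $[W]$; this is only a cosmetic reorganization of the same double-counting.
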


\begin{proof} Let $\O_g(\C)\subset \O(\C)$ consists of $X$ with $\phi(X)=g\in G$.
Let
$$D_g=\sum_{X\in \O_g(\C)}\dim(X)[X]\in K(\C)\ot \k.$$
Note that by \ref{ndeg} (a) we have
$D_g\ne 0$ for any $g\in G$. We claim that for $X\in \O_g(\C)$ we have
$$ [X]D_h=\dim(X)D_{gh}\; \mbox{and}\; D_h[X]=\dim(X)D_{hg}.$$
Here is the proof of the first formula (and the second one is similar):

$$[X]D_h=\sum_{Y\in \O_h(\C)}\dim(Y)[X\ot Y]=\sum_{Y\in \O_h(\C), Z\in \O_{gh}(\C)}\dim(Y)
\dim \Hom(X\ot Y,Z)[Z]=$$
$$\sum_{Y\in \O_h(\C), Z\in \O_{gh}(\C)}\dim(Y)
\dim \Hom(Y,X^*\ot Z)[Z]=\sum_{Z\in \O_{gh}(\C)}\dim(X^*\ot Z)[Z]=$$
$$\sum_{Z\in \O_{gh}(\C)}\dim(X^*)\dim(Z)[Z]=\dim(X)D_{gh}.$$
It follows that $D_gD_h=\dim(D_g)D_{gh}=\dim(D_h)D_{gh}$, so $\dim(D_g)=\dim(D_h)$ for
all $g,h\in G$. The result follows since 
$$\dim(\C)=\sum_{g\in G}\dim(D_g)\; \mbox{and}\; \dim(\C_1)=\dim(D_1).$$
\end{proof}

\begin{remark} (i) Argument in the proof of Lemma \ref{grim} is fairly standard, see e.g. \cite[Theorem 3.5.2]{EGNO}.

(ii) Using construction of {\em pivotalization} (see \cite[Definition 7.21.9]{EGNO}) one can
extend Lemma \ref{grim} to fusion categories which are not necessarily spherical.

(iii) In the special case $p=0$ Lemma \ref{grim} is \cite[Corollary 4.28]{DGNO}.
\end{remark}

\subsection{Negligible morphisms} \label{negl}
Let $\C$ be as in the beginning of Section \ref{ndeg} and assume that $\C$ is equipped 
with a spherical structure. We recall that a morphism $f: X\to Y$ in $\C$ is called {\em
negligible} if for any morphism $u: Y\to X$ the trace of the composition $fu$ equals zero,
see e.g. \cite{A+, BW, Ds}. For $X,Y\in \C$ let $\N(X,Y)\subset \Hom(X,Y)$ denote the subspace 
of negligible morphisms.
It is well known that negligible morphisms form a {\em tensor ideal} in $\C$.
This means that a composition $fg$ and tensor product $f\ot g$ is negligible whenever at least
one of $f$ and $g$ is negligible. Thus one defines a new category $\bar \C$ called 
{\em quotient of $\C$ by negligible morphisms} as follows: objects of $\bar \C$ are the same
as objects of $\C$ and $\Hom_{\bar \C}(X,Y)=\Hom_\C(X,Y)/\N(X,Y)$ and the composition of
morphisms in $\bar \C$ is induced by composition in $\C$. We will denote by $\bar X$ an
object of $\bar \C$ corresponding to $X\in \C$.

The tensor product in $\C$
descends to a tensor product in $\bar \C$; thus $\bar \C$ is a tensor category endowed with
a $\k-$linear quotient tensor functor $\C \to \bar \C$ sending $X\in \C$ to $\bar X \in \bar \C$. 
The category $\bar \C$ is equipped with spherical
structure and the quotient functor is compatible with the spherical structures. In addition
the category $\bar \C$ is braided or symmetric if $\C$ is. 

We will use the following result:

\begin{proposition}[\cite{BW} Proposition 3.8, see also \cite{A+} Theorem 2.7 and \cite{EGNO} Exercise 8.18.9] \label{barw}
Assume that $\C$ is abelian and that all
morphism spaces in $\C$ are finite dimensional. Then $\bar \C$ is semisimple and its simple 
objects are precisely $\bar X$ where $X$ is an indecomposable object of $\C$
with $\dim(X)\ne 0$.
\end{proposition}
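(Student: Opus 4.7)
The plan is to combine Krull-Schmidt with the fact that the categorical trace kills nilpotent endomorphisms. Since $\C$ is abelian with finite-dimensional Hom spaces over the algebraically closed field $\k$, it is Krull-Schmidt: every object decomposes as a finite direct sum of indecomposables, and for an indecomposable $X$ the ring $\End_\C(X)$ is a local finite-dimensional $\k$-algebra whose residue field is $\k$ and whose Jacobson radical $J$ consists of nilpotent elements. Moreover $\N(X,X)$ is a two-sided ideal of $\End_\C(X)$: by cyclicity $\tr(hf\cdot g)=\tr(f\cdot gh)$, so left or right multiplication preserves negligibility, which is just the restriction of the tensor-ideal property to $\End$-spaces.

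The main technical input I would use is the additivity of the categorical trace on short exact sequences in a rigid abelian tensor category: given a $j$-stable subobject $X'\subseteq X$, one has $\tr(j)=\tr(j|_{X'})+\tr(j|_{X/X'})$. Applying this to the $j$-stable descending filtration $X\supseteq \Image(j)\supseteq \Image(j^2)\supseteq \cdots \supseteq 0$, on each graded piece $\Image(j^i)/\Image(j^{i+1})$ the induced endomorphism is zero, so iterated additivity yields $\tr(j)=0$ for every nilpotent $j$.

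Since $J$ is two-sided, $jg\in J$ for every $j\in J$ and $g\in\End_\C(X)$, so $\tr(jg)=0$ and $J\subseteq \N(X,X)$. I would then split into two cases. If $\dim(X)=0$, every $g=\lambda\id_X+j$ with $j\in J$ satisfies $\tr(g)=\lambda\dim(X)+\tr(j)=0$, so $\id_X$ is itself negligible and $\bar X=0$. If $\dim(X)\ne 0$, then $\tr(\id_X)=\dim(X)\ne 0$ shows $\id_X$ is not negligible, hence the proper two-sided ideal $\N(X,X)$ of the local ring $\End_\C(X)$ lies in the unique maximal ideal $J$; combined with the reverse inclusion this forces $\N(X,X)=J$ and so $\End_{\bar\C}(\bar X)=\k$. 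Finally, for non-isomorphic indecomposables $X,Y$ with $\dim(X),\dim(Y)\ne 0$ and any $f:X\to Y$, $g:Y\to X$, the composite $fg\in\End_\C(Y)$ is not invertible (else Krull-Schmidt would force $X\cong Y$), hence $fg\in J=\N(Y,Y)$, so $\tr(fg)=0$, $f\in\N(X,Y)$, and $\Hom_{\bar\C}(\bar X,\bar Y)=0$. Decomposing every object of $\C$ into indecomposables now yields the claimed semisimple structure on $\bar\C$.

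The step I expect to require the most care is the additivity of the categorical trace on short exact sequences: this is a classical property of rigid abelian tensor categories but is the only nontrivial categorical input. Once it is in hand, everything else reduces to bookkeeping inside finite-dimensional local $\k$-algebras.
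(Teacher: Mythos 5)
The paper does not prove this proposition; it cites [BW], [A+], and the EGNO exercise, so there is no in-text argument to compare against. Your proof is correct and is essentially the argument found in those references. The Krull--Schmidt mechanics are handled properly: locality of $\End_\C(X)$ for indecomposable $X$ (with residue field $\k$ because $\k$ is algebraically closed), two-sidedness of $\N(X,X)$ via cyclicity, the dichotomy according to whether $\dim(X)$ vanishes, and the vanishing of $\Hom_{\bar\C}(\bar X,\bar Y)$ for non-isomorphic indecomposables via $fg\in J(\End_\C(Y))$ and cyclicity. The one genuinely nontrivial ingredient is exactly the one you flag: additivity of the pivotal trace on short exact sequences, which feeds the filtration $X\supseteq\Image(j)\supseteq\Image(j^2)\supseteq\cdots$ and gives $\tr(j)=0$ for nilpotent $j$, hence $J\subseteq\N(X,X)$. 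This additivity is a theorem for rigid abelian pivotal tensor categories over a field, not a formality; a careful write-up should cite it precisely (it appears in [A+], and in the spherical setting in [BW]) or supply the proof, which filters $B^*\ot B$ by the subobjects coming from $A\subset B$ and the dual inclusion $C^*\subset B^*$ and checks that (co)evaluation and the name of an endomorphism preserving $A$ respect this filtration. Granting that lemma, your argument is complete.
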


Note that if $X$ is an indecomposable object with $\dim(X)=0$ then $\id \in \Hom(X,X)$ is
negligible and $\bar X=0$.

\begin{example} \label{produ}
In the setup of Example \ref{equi} consider the quotient $\bar \C_G$ 
of $\C_G$ by the negligible morphisms. The indecomposable objects of 
$\C_G=\C \bt \Rep_\k(G)$ are of the form $X\bt V$ where $X\in \O(\C)$ and $V$ is an
indecomposable object of $\Rep_\k(V)$. We have $\dim(X\bt V)=\dim(X)\dim(V)=0$ if and
only if $\dim(V)=0$, see \ref{ndeg} (a). 
Thus Proposition \ref{barw} combined with Lemma \ref{AA} imply
that $\bar \C_G=\C \bt \overline{\Rep_\k(G)}$ where $\overline{\Rep_\k(G)}$ is the quotient
of $\Rep_\k(G)$ by the negligible morphisms.
\end{example}

\section{Frobenius functor}\label{Frobe}
\subsection{Representations of the cyclic group}\label{green}
Assume that $p>0$. Let $C_p$ be the cyclic group of order $p$ with generator $\sigma$.
Let $\k[C_p]$ be the group algebra of $C_p$; clearly $\k[C_p]=\k[\sigma]/(\sigma^p-1)=
\k[\sigma]/(\sigma-1)^p$.
For any $s\in \BZ$ satisfying $1\le s\le p$ let $L_s$ be $C_p-$module
$\k[\sigma]/(1-\sigma)^s$. Clearly $\dim(L_s)=s$. By the Jordan normal
form theory we have:

(a) $L_s$ exhaust all the isomorphism classes of indecomposable objects 
in the category $\Rep_\k(C_p)$. Moreover, $L_s^*\simeq L_s$.

The decompositions of the tensor products of the modules $L_s$ were described by Green in \cite{G}. We record here some of his results:

(b) $L_1=\be$ is the unit object of $\Rep_\k(C_p)$;
\begin{equation}
L_2\ot L_s=\left\{ \begin{array}{cl} L_2& \mbox{if}\; s=1\\ L_{s-1}\oplus L_{s+1}& \mbox{if}\; s=2,\ldots, p-1\\L_p\oplus L_p& \mbox{if}\; s=p\end{array}\right.
\end{equation}
\begin{equation} \label{smul}
L_{p-1}\ot L_s=L_{p-s}\oplus (s-1)L_p
\end{equation}




\subsection{Universal Verlinde category}

\begin{definition} We define the {\em universal Verlinde category} $\Ver_p$ to be the quotient
of the category $\Rep_\k(C_p)$ by the negligible morphisms. 
\end{definition}

By the results of section \ref{negl}, $\Ver_p$ is a symmetric fusion category. The simple objects
of $\Ver_p$ are precisely $\bar L_s, s=1, \ldots ,p-1$
(note that $\bar L_p=0$). Obviously $\bar L_1$ is the unit object of $\Ver_p$. 
The results of section \ref{green} imply the following relations
in the Grothendieck ring $K(\Ver_p)$:
\begin{equation}\label{vermul}
[\bar L_2][\bar L_s]=[\bar L_{s-1}]+[\bar L_{s+1}], s=1, \ldots ,p-1,
\end{equation}
where we define $[\bar L_0]=[\bar L_p]=0$. Using relation \eqref{vermul} one
determines the multiplication in $K(\Ver_p)$:
\begin{equation}\label{vermul1}
[\bar L_r][\bar L_s]=\sum_{i=1}^c[\bar L_{|r-s|+2i-1}], \mbox{where}\; c=\min(r,s,p-r,p-s) 
\end{equation}

We record the following consequence of \eqref{vermul1}:

\begin{equation}\label{vermul2}
\bL_3\; \mbox{is a summand of}\; \bL_s\ot \bL_s^*\; \mbox{if}\; s\ne 1,p-1.
\end{equation}

Note that the Grothendieck ring of $\Ver_p$ as a ring with basis coincides with so called {\em Verlinde
ring} associated with the quantum group $SL_2$ at $2p-$th root of unity or affine Lie
algebra $\hat sl_2$ at the level $p-2$, see e.g. \cite[4.10.6]{EGNO}. This is our motivation for the choice of the name.

\begin{example}\label{lexam}
(i) The category $Ver_2$ has just one simple object $\bar L_1=\be$ up to isomorphism; thus we have $Ver_2\simeq \Vec$. 

(ii) The category $Ver_3$ has two simple objects $\bar L_1=\be$ and $\bar L_2$ up to isomorphism; 
by \eqref{smul} we have $\bar L_2\ot \bar L_2\simeq \bL_1$; since $\dim(\bL_2)=-1$ we get that $Ver_2\simeq \sVec$.

(iii) The category $Ver_5$ has four simple objects $\bar L_1=\be$, $\bar L_2, \bar L_3, \bar L_4$ 
up to isomorphism; one determines from \eqref{vermul1} that $\bL_4\ot \bL_4\simeq \bL_1=\be$ and
$\bL_3\ot \bL_3\simeq \bL_1\oplus \bL_3$. Thus the fusion subcategory 
$\langle \bL_1, \bL_3\rangle$ is an example of Yang-Lee category from Section \ref{mconj}
\end{example}

It follows from \eqref{smul} that we have
\begin{equation}\label{delta} \bL_{p-1}\ot \bL_s=\bL_{p-s}\end{equation}
In particular we have $\bL_{p-1}\ot \bL_{p-1}\simeq \bL_1=\be$. Thus for $p>2$ the
direct sums of $\bL_1$ and $\bL_{p-1}$ form a fusion subcategory of $\Ver_p$; it is easy
to see that this subcategory is tensor equivalent to $\sVec$ and we will refer to this subcategory
as $\sVec \subset \Ver_p$. On the other hand it follows from \eqref{vermul1} that for $p>3$ the
direct sums of $\bL_1, \bL_3, \ldots, \bL_{p-2}$ also for a fusion subcategory 
$\Ver_p^+\subset \Ver_p$.

\begin{proposition} \label{subV}
Assume $p>3$.

(i) The subcategory $\Ver_p^+\subset \Ver_p$ is generated by $\bL_3$;

(ii) The category $\Ver_p$ has precisely four fusion subcategories:
$\Vec, \sVec$, $\Ver_p^+$, $\Ver_p$;

(iii) We have an equivalence of symmetric fusion categories $\Ver_p\simeq \Ver_p^+\bt \sVec$.
\end{proposition}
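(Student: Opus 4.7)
The plan is to prove parts (i), (iii), and (ii) in that order, since (iii) invokes (i) via Lemma \ref{AA}, and the case analysis in (ii) uses both. For part (i), the containment $\langle \bL_3\rangle \subset \Ver_p^+$ is immediate, and for the reverse inclusion I would induct on odd $s\in\{3,5,\dots,p-2\}$. Specialising \eqref{vermul1} to $r=3$ and $3\le s\le p-4$ gives $c=\min(3,s,p-3,p-s)=3$, so
\[
[\bL_3][\bL_s]=[\bL_{s-2}]+[\bL_s]+[\bL_{s+2}],
\]
with base case $[\bL_3]^2=[\bL_1]+[\bL_3]+[\bL_5]$ for $p\ge 7$ (for $p=5$ one has $[\bL_3]^2=[\bL_1]+[\bL_3]$, which already exhausts $\Ver_5^+$). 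Iterating upward from $s=3$ produces every odd-indexed $\bL_{2k+1}$ with $2k+1\le p-2$ as a direct summand, and $\bL_1$ appears in the first step.

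For part (iii), I would apply Lemma \ref{AA} with $\A_1=\Ver_p^+$ and $\A_2=\sVec$ inside $\Ver_p$. Since $p$ is odd, $p-1$ is even, so the simples of $\Ver_p^+$ (odd indices $1,3,\dots,p-2$) and of $\sVec$ (indices $1$ and $p-1$) meet only in $\be$; hence Lemma \ref{AA} yields $\langle\Ver_p^+,\sVec\rangle\simeq \Ver_p^+\bt\sVec$ as symmetric tensor categories. That $\langle\Ver_p^+,\sVec\rangle$ equals all of $\Ver_p$ then follows from \eqref{delta}: as $k$ ranges over $0,1,\dots,(p-3)/2$, the products $\bL_{p-1}\ot\bL_{2k+1}=\bL_{p-2k-1}$ enumerate all even-indexed simples of $\Ver_p$, and together with $\Ver_p^+$ these exhaust $\O(\Ver_p)$.

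For part (ii), let $\A\subset\Ver_p$ be a fusion subcategory. By \eqref{vermul2}, any simple $\bL_s\in\A$ with $s\notin\{1,p-1\}$ forces $\bL_3\in\A$, whence $\Ver_p^+\subset\A$ by part (i). There are three disjoint cases. If the simples of $\A$ lie in $\{\bL_1,\bL_{p-1}\}$, then $\A=\Vec$ or $\A=\sVec$. If $\A\supset\Ver_p^+$ but $\A$ contains no even-indexed simple, then $\A=\Ver_p^+$. Otherwise $\A\supset\Ver_p^+$ and contains some even-indexed $\bL_s$ with $2\le s\le p-1$: if $s=p-1$ then $\sVec\subset\A$ and (iii) gives $\A=\Ver_p$; if $2\le s\le p-3$, iterating $[\bL_3][\bL_s]=[\bL_{s-2}]+[\bL_s]+[\bL_{s+2}]$ inside $\A$ descends to $\bL_2\in\A$, after which $[\bL_2][\bL_k]=[\bL_{k-1}]+[\bL_{k+1}]$ inductively generates the remaining simples and $\A=\Ver_p$. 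The main obstacle will be the bookkeeping in part (ii): one must confirm that these three cases truly exhaust all fusion subcategories and that the inductive descent via $\bL_3$ always lands on $\bL_2$ at the boundary. The reduced values of $c$ in \eqref{vermul1} for $s$ close to $1$ or to $p-2$ need separate but elementary verification.
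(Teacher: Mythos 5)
Your argument is correct and follows essentially the same route as the paper: part~(i) by computing $\bL_3\ot\bL_s$ via \eqref{vermul1}, part~(iii) directly from Lemma~\ref{AA}, and part~(ii) by using \eqref{vermul2} to force $\Ver_p^+$ into any subcategory containing a simple $\bL_s$ with $s\ne 1,p-1$ and then finishing by a short case analysis. The paper's proof is merely terser (``(i) is immediate from \eqref{vermul1}\dots This implies (ii)\dots (iii) is immediate from Lemma~\ref{AA}''); your write-up fills in exactly the bookkeeping the paper leaves implicit, and the boundary cases of $c$ in \eqref{vermul1} that you flag do check out.
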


\begin{proof} (i) is immediate from \eqref{vermul1}. If a fusion subcategory of $\Ver_p$ contains
$\bL_s$ with $s\ne 1,p-1$ then by \eqref{vermul2} and (i) it contains $\Ver_p^+$. This implies
(ii). Finally (iii) is immediate from Lemma \ref{AA}.
\end{proof}

\subsection{}
Let $\C$ be a semisimple pre-Tannakian category. Let $\C^{(1)}$ be the {\em Frobenius twist}
of $\C$, that is $\C^{(1)}=\C$ as an additive symmetric tensor category with $\k-$linear
structure changed as follows: for $\lambda \in \k$ and a $\C-$morphism $f$ we set
$\lambda \cdot f:=\lambda^pf$. Thus  the Grothendieck ring $K(\C^{(1)})$
is canonically isomorphic to the Grothendieck ring of $\C$.
Since $\lambda \mapsto \lambda^p$ is an automorphism of
$\k$, $\C^{(1)}$ is a Galois conjugate of $\C$. In particular we have $\Ver_p^{(1)}\simeq \Ver_p$ since
$\Ver_p$ is defined over the prime subfield of $\k$.

Consider a functor $P_0: \C \to \C$ sending an object $X$ to $X^{\ot p}$ and a morphism $f:X \to Y$
to $\underbrace{f\ot \ldots \ot f}_{\mbox{$p$ factors}}$.  This functor is not additive but it has an obvious structure of symmetric tensor functor. Moreover, the commutativity isomorphisms 
determine an action of the symmetric group $S_p$ on an object $X^{\ot p}$ (see e.g. \cite[9.9]{EGNO}),
so we can upgrade the functor $P_0$ to the functor taking values in the category of
equivariant objects. We will use only a part of this structure as follows. Let $C_p\subset S_p$
be the cyclic subgroup generated by $p-$cycle $\sigma =(1,2,\ldots,p)$. Restricting the action
of $S_p$ above to $C_p$ we get a symmetric tensor functor $P_1: \C \to \C_{C_p}$ where
$\C_{C_p}$ is the equivariantization of $\C$ as in Example \ref{equi}. 

Let $\bar \C_{C_p}$ be the quotient of $\C_{C_p}$ by the negligible morphisms, see
Section \ref{negl}. Recall that we have an identification $\bar \C_{C_p}=\C \bt \overline{\Rep_\k(C_p)}=\C \bt \Ver_p$, see Example \ref{produ}. Let $Q$ be the quotient functor
$\C_{C_p}\to \bar \C_{C_p}$.
We define a symmetric tensor functor $\Fr_0$ as a composition
$$\C \xrightarrow{P_1}\C_{C_p}\xrightarrow{Q}\bar \C_{C_p}=\C \bt \Ver_p.$$

We have the following

\begin{lemma} The functor $\Fr_0$ is additive.
\end{lemma}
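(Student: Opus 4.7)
The plan is to verify $\Fr_0(f+g) = \Fr_0(f) + \Fr_0(g)$ for morphisms $f, g : X \to Y$ in $\C$. By construction this reduces to showing that the morphism
\[
(f+g)^{\otimes p} - f^{\otimes p} - g^{\otimes p} : X^{\otimes p} \to Y^{\otimes p},
\]
viewed as a $C_p$-equivariant morphism in $\C_{C_p}$, is negligible. For each subset $S \subseteq \{1,\ldots,p\}$ let $h_S : X^{\otimes p} \to Y^{\otimes p}$ denote the tensor product with $f$ in the slots indexed by $S$ and $g$ in the remaining slots, so that by multinomial expansion
\[
(f+g)^{\otimes p} - f^{\otimes p} - g^{\otimes p} = \sum_{\emptyset \ne S \subsetneq \{1,\ldots,p\}} h_S.
\]
The cyclic generator $\sigma$ of $C_p$ permutes these subsets, and the corresponding morphisms satisfy $h_{\sigma S} = \sigma \circ h_S \circ \sigma^{-1}$. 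Since $p$ is prime, every $C_p$-orbit of a proper nonempty subset is free of size $p$; picking a representative $S_0$ from each orbit, the orbit sum is the \emph{norm} morphism
\[
N(h_{S_0}) := \sum_{i=0}^{p-1} \sigma^i \circ h_{S_0} \circ \sigma^{-i},
\]
which is automatically $C_p$-equivariant. It therefore suffices to show $Q(N(h)) = 0$ in $\bar \C_{C_p}$ for any $h : X^{\otimes p} \to Y^{\otimes p}$ in $\C$.

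To this end I would use the induction functor $\Ind : \C \to \C_{C_p}$, which under the equivalence $\C_{C_p} \simeq \C \bt \Rep_\k(C_p)$ of Example \ref{equi} corresponds to $V \mapsto V \bt L_p$. This functor is biadjoint to the restriction (forgetful) functor $\C_{C_p} \to \C$, as one sees by transferring the standard biadjunction in $\Rep_\k(C_p)$ across the equivalence. Writing $c_X : X^{\otimes p} \to \Ind(X^{\otimes p})$ and $a_Y : \Ind(Y^{\otimes p}) \to Y^{\otimes p}$ for the unit and counit of this biadjunction in $\C_{C_p}$, a direct computation yields the factorization
\[
N(h) = a_Y \circ \Ind(h) \circ c_X,
\]
so that $N(h)$ factors through $\Ind(X^{\otimes p})$ in $\C_{C_p}$.

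Finally, every indecomposable summand of $\Ind(X^{\otimes p})$ corresponds under the equivalence to $Z \bt L_p$ for some indecomposable $Z \in \C$, and has dimension $\dim(Z)\cdot\dim(L_p) = \dim(Z)\cdot p = 0$ in $\k$. Hence by Proposition \ref{barw} the image of $\Ind(X^{\otimes p})$ in $\bar \C_{C_p}$ is zero, so any morphism factoring through it is negligible. Thus $Q(N(h)) = 0$, and summing over orbits produces the desired equality in $\bar \C_{C_p} = \C \bt \Ver_p$. The main technical point is establishing the norm factorization intrinsically in $\C_{C_p}$; once the biadjunction of $\Ind$ with restriction is in place (as inherited from $\Rep_\k(C_p)$ via the external product), the formula $N(h) = a_Y \circ \Ind(h) \circ c_X$ is standard group-cohomological input.
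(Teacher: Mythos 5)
Your proof is correct, but it takes a genuinely different route from the paper's. Both arguments begin with the same multinomial decomposition and the same observation that the non-pure terms group into $C_p$-orbit sums of size $p$ (the orbits being free because $p$ is prime). From there the paper finishes by a direct trace computation: for $u$ commuting with $\sigma$, the traces $\Tr\bigl((\sigma^i h \sigma^{-i})u\bigr)$ are all equal, so the trace of the orbit sum against $u$ is $p$ times a single term, hence zero in characteristic $p$; negligibility follows immediately from the definition. You instead invoke the biadjunction between induction and restriction (transported from $\Rep_\k(C_p)$ across the equivalence $\C_{C_p}\simeq \C\bt\Rep_\k(C_p)$) to factor the orbit sum as the norm $a_Y\circ\Ind(h)\circ c_X$ through $\Ind(X^{\otimes p})\simeq X^{\otimes p}\bt L_p$. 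Since every indecomposable summand of this object has the form $Z\bt L_p$ with $\dim(Z\bt L_p)=p\cdot\dim(Z)=0$, the object vanishes in $\bar\C_{C_p}$ by Proposition \ref{barw}, so the norm is killed by $Q$. Both proofs ultimately exploit the same arithmetic fact that $p=0$ in $\k$, but the paper's version is more elementary and self-contained, requiring nothing beyond the definition of negligible morphism, while yours is more structural: it explains the vanishing as a factorization through an induced object whose image in the semisimplification is zero, at the cost of setting up the biadjunction and the norm formula. The one point you should state explicitly rather than defer to is the factorization $N(h)=a_Y\circ\Ind(h)\circ c_X$ itself; this is standard but is the load-bearing step, and verifying it componentwise (the $g$-component of the unit is $\rho(g)^{-1}$, followed by $h$, followed by $\rho(g)$ in the counit) is a one-line check worth recording.
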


\begin{proof} We have 
$$P_1(f+g)=\underbrace{(f+g)\ot \ldots \ot (f+g)}_{\mbox{$p$ factors}}=P_1(f)+P_1(g)+
\mbox{other terms}$$
where the other terms are monomials $h_1\ot \ldots h_p$ where each $h_i$ is $f$ or $g$ and not
all $h_i$ are the same. The group $C_p$ acts on such monomials by permuting tensorands cyclically; clearly such an action has no fixed points. Thus $P_1(f+g)-P_1(f)-P_1(g)$ splits into summands of the form
\begin{equation}\label{cycsum} 
h_1\ot h_2\ot \ldots \ot h_p+h_2\ot h_3\ot \ldots \ot h_1+\ldots +h_p\ot h_1\ot \ldots \ot h_{p-1}.
\end{equation}
Sum \eqref{cycsum} is a morphism in the category $\C_{C_p}$. Let us show
that this morphism is negligible. Thus we need to show that the trace of the composition 
of \eqref{cycsum} with a suitable morphism $u$ is zero. 

Observe that
$$h_2\ot h_3\ot \ldots \ot h_1=\sigma (h_1\ot h_2\ot \ldots \ot h_p)\sigma^{-1}$$
whence 
$$\Tr ((h_2\ot h_3\ot \ldots \ot h_1)u)=
\Tr((h_1\ot h_2\ot \ldots \ot h_p)\sigma^{-1}u\sigma)=\Tr((h_1\ot h_2\ot \ldots \ot h_p)u)$$
since by the definition of morphisms in $\C_{C_p}$ the morphism $u$ commutes with $\sigma$.
We see that the contribution of each summand in \eqref{cycsum} to the total trace is the same,
which shows that the total trace is zero since we have $p$ summands. 
Hence $\Fr_0(f+g)=\Fr_0(f)+\Fr_0(g)$ as desired.
\end{proof}

The functor $\Fr_0$ is not $\k-$linear since obviously $\Fr_0(\lambda f)=\lambda^p\Fr_0(f)$ for a morphism $f$
and $\lambda \in \k$.

\begin{definition} The {\em Frobenius functor} $\Fr: \C \to \C^{(1)}\bt \Ver_p$ is a $\k-$linear symmetric
tensor functor derived from $\Fr_0$ using the identifications 
$$\C \bt \Ver_p=(\C \bt \Ver_p)^{(1)}=\C^{(1)}\bt \Ver_p^{(1)}=\C^{(1)} \bt \Ver_p.$$
\end{definition}

\begin{example} Let $p=5$ and let $\C$ be the Yang-Lee category from Section \ref{conjsec}. Let us compute $\Fr(X)$. 
We have $X^{\ot 5}=3\cdot \be \oplus 5X$ whence $\Fr(X)=\be \bt \bar V_\be\oplus X\bt \bar V_X$ where $V_\be, V_X\in \Rep_\k(C_p)$ and $\dim(V_\be)=3, \dim(V_X)=5$. The only possibility compatible with $\FPdim(\Fr(X))=\FPdim(X)$ is
$\Fr(X)=\be \bt \bL_3$. Thus $\sigma$ acts on $V_\be$ as a Jordan cell of size 3 and on $V_X$ as a Jordan cell of size 5. 
\end{example}

The following result is follows directly from definitions:

\begin{lemma} \label{stup}
Let $\id \ot \dim$ be the ring homomorphism $K(\C)\ot K(\Ver_p)\to K(\C)\ot \k$ sending $x\ot y$ to $x\ot \dim(y)$.
Then $\id \ot \dim ([\Fr(X)])=[X]^p\in K(\C)\ot \k$.\qed
\end{lemma}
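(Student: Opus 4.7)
The plan is to unwind the construction of $\Fr$ and reduce the identity to the tautology that $X^{\ot p}$, viewed as an object of $\C$ (i.e.\ after forgetting the $C_p$-action coming from $P_1$), has Grothendieck class $[X]^p$.

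First I would dispose of the Frobenius twist. The identification $\C \bt \Ver_p = \C^{(1)} \bt \Ver_p$ used to define $\Fr$ changes only the $\k$-linear structure (scalars are rescaled by $\lambda \mapsto \lambda^p$) and leaves the underlying additive tensor category intact, so it induces the canonical isomorphism on Grothendieck rings. Hence $[\Fr(X)] = [\Fr_0(X)]$ in $K(\C) \ot K(\Ver_p)$, and it suffices to prove the claim with $\Fr_0 = Q \circ P_1$ in place of $\Fr$.

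The key observation is that the composition
\[
K(\Rep_\k(C_p)) \longrightarrow K(\Ver_p) \xrightarrow{\dim} \k
\]
agrees with the ordinary $\k$-dimension map $[W] \mapsto \dim_\k(W)$. Indeed, the quotient sends $[L_s] \mapsto [\bL_s]$ for $1 \le s \le p-1$ and $[L_p] \mapsto 0$; by compatibility of the quotient functor with the spherical structure (Section \ref{negl}), $\dim(\bL_s)$ equals the canonical dimension of $L_s$ in $\Rep_\k(C_p)$, which is $s \in \k$; and $\dim_\k(L_p) = p = 0$ in $\k$ matches $[L_p] \mapsto 0$. So the two maps agree on the basis $\{[L_s]\}_{s=1}^p$.

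Combined with the identification $\C_{C_p} = \C \bt \Rep_\k(C_p)$ from Example \ref{equi}, this shows that the composite
\[
K(\C_{C_p}) \xrightarrow{Q_*} K(\C) \ot K(\Ver_p) \xrightarrow{\id \ot \dim} K(\C) \ot \k
\]
coincides with the map induced by the forgetful functor $\C_{C_p} \to \C$, namely $[Y \bt W] \mapsto [Y] \cdot \dim_\k(W)$. Applied to $[P_1(X)]$, whose underlying $\C$-object is $X^{\ot p}$, this returns $[X^{\ot p}] = [X]^p$ in $K(\C) \subset K(\C) \ot \k$, as required. The argument is essentially bookkeeping; the only point meriting verification is the identification of the two dimension maps above, which ultimately rests on the single numerical fact that $p = 0$ in $\k$.
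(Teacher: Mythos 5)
Your unwinding of the definitions is correct and is exactly the ``direct verification'' the paper alludes to (the paper gives no proof beyond saying the lemma follows from definitions). There is one small imprecision worth fixing: the quotient functor $Q$ is additive but \emph{not} exact (e.g.\ $0\to L_1\to L_p\to L_{p-1}\to 0$ is exact in $\Rep_\k(C_p)$ while $Q(L_p)=0$ and $Q(L_1),Q(L_{p-1})\neq 0$), so it does not induce a map $Q_*$ on ordinary Grothendieck groups. In $K(\Rep_\k(C_p))$ one has $[L_p]=p[L_1]$, and $p[\bL_1]\neq 0$ in the free $\BZ$-module $K(\Ver_p)$; thus the classes $[L_s]$ are not a basis of $K(\Rep_\k(C_p))$ and your rule on them does not extend to a group homomorphism. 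This is harmless for the lemma: either phrase the computation directly in terms of the Krull--Schmidt decomposition $P_1(X)\cong\bigoplus_{Y,s}m_{Y,s}\,Y\bt L_s$ (which only uses that $Q$ is additive, and gives $(\id\ot\dim)[\Fr_0(X)]=\sum_{Y,s}m_{Y,s}\,s\,[Y]=[X^{\ot p}]$ in $K(\C)\ot\k$), or note that the offending relation $[L_p]=p[L_1]$ becomes $0=0$ after $-\ot\k$, so $Q_*$ \emph{is} well-defined after tensoring with $\k$, which is all the target $K(\C)\ot\k$ requires. The two essential observations you isolate --- that passing to the Frobenius twist leaves Grothendieck classes unchanged, and that $\dim(\bL_s)\equiv s \pmod p$ because the quotient functor preserves categorical dimension and the categorical dimension of $L_s$ in $\Rep_\k(C_p)$ equals its $\k$-dimension --- are exactly right and give the stated identity.
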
 

\begin{example} \label{FrVer}
Let $\C =\Ver_p$ and $p>2$.
Using \eqref{vermul1} one computes $[\bL_2]^p=-2[\bL_{p-1}] \pmod{p}$ in $K(\C)$.
Using Lemma \ref{stup} and the Frobenius-Perron dimension we deduce that 
$\Fr(\bL_2)=\bL_{p-1}\bt \bL_{p-2}\in \C^{(1)}\bt \Ver_p$. Using \eqref{vermul1} again we obtain
$$\Fr(\bL_s)=\left\{ \begin{array}{cc}\be \bt \bL_s&\mbox{if $s$ is odd,}\\ 
\bL_{p-1}\bt \bL_{p-s}&\mbox{if $s$ is even.}\end{array}\right.$$
\end{example}

We will say that $\C$ is of {\em Frobenius type} $\A$ if $\A \subset \Ver_p$ is the smallest fusion subcategory 
such that the image $\Fr(\C)$ is contained in $\C^{(1)}\bt \A \subset \C^{(1)}\bt \Ver_p$. Thus by Proposition \ref{subV} (ii)
for $p>3$ there are just four possibilities for the Frobenius type of $\C$. For example the categories $\Vec, \sVec$ are
of Frobenius type $\Vec$ and the category $\Ver_p$ is of Frobenius type $\Ver_p^+$ by Example \ref{FrVer}. 

\begin{remark} Observe that the formation of the Frobenius functor is compatible with $\k-$linear symmetric tensor functors,
that is for such a functor $F: \C \to \D$ we have $\Fr(F(X))=(F^{(1)}\bt \id)(\Fr(X))\in \D^{(1)}\bt \Ver_p$. It follows
that any category that admits a fiber functor or a super fiber functor is of Frobenius type $\Vec$. Moreover,
Conjecture \ref{mconj} implies that a semisimple pre-Tannakian category $\C$ of subexponential growth is of Frobenius type
$\Vec$ or $\Ver_p^+$.
\end{remark}

In the special case of category $\C$ of Frobenius type
$\Vec$ the image of Frobenius functor is contained in $\C^{(1)}\bt \Vec =\C^{(1)}$. We have the following immediate
consequence of Lemma \ref{stup}:

\begin{corollary}\label{FrFr}
 Let $\C$ be of Frobenius type $\Vec$. Then we have the following equality in $K(\C)\ot \k$:
 \begin{equation*}
[X]^p=[\Fr(X)].\qed
\end{equation*}
\end{corollary}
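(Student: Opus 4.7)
The plan is to directly unpack the definition of Frobenius type $\Vec$ and then invoke Lemma \ref{stup}. By hypothesis, the smallest fusion subcategory $\A \subset \Ver_p$ with $\Fr(\C) \subset \C^{(1)} \bt \A$ is $\A = \Vec$, so in particular $\Fr(X) \in \C^{(1)} \bt \Vec$. First, I would use the canonical equivalence $\C^{(1)} \bt \Vec \simeq \C^{(1)}$ (the external product with $\Vec$ is trivial) to write $\Fr(X) = Y \bt \be$ for a well-defined object $Y \in \C^{(1)}$, and note that under the canonical identification $K(\C^{(1)}) = K(\C)$ the class $[Y]$ corresponds exactly to what we mean by $[\Fr(X)]$ on the right-hand side of the claimed identity.

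Next, I would track $[\Fr(X)] = [Y] \ot [\be]$ through the ring homomorphism $\id \ot \dim \colon K(\C) \ot K(\Ver_p) \to K(\C) \ot \k$ of Lemma \ref{stup}. Since $\dim(\be) = 1 \in \k$, the image is simply $[Y] \in K(\C) \ot \k$. Lemma \ref{stup} then asserts that this image equals $[X]^p$, which rewritten with the identification above is precisely $[\Fr(X)] = [X]^p$ in $K(\C) \ot \k$.

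There is no substantive obstacle: the corollary is essentially a tautological specialization of Lemma \ref{stup}, obtained by observing that for objects supported in the $\Vec$-factor of $\C^{(1)} \bt \Ver_p$ the homomorphism $\id \ot \dim$ acts as the identity on the $K(\C)$-component. The only care needed is in keeping straight the chain of canonical identifications $\C^{(1)} \bt \Vec \simeq \C^{(1)}$ and $K(\C^{(1)}) \simeq K(\C)$, so that both sides of the claimed equation are legitimately compared inside $K(\C) \ot \k$.
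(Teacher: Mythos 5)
Your proposal is correct and follows exactly the paper's intended route: the corollary is presented as an immediate consequence of Lemma \ref{stup} once one observes that Frobenius type $\Vec$ forces $\Fr(X)\in\C^{(1)}\bt\Vec\simeq\C^{(1)}$, on which $\id\ot\dim$ acts trivially. No discrepancy to report.
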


\section{Proof of Theorem \ref{main}} 
\subsection{Frobenius injective categories}
We say that a symmetric fusion category is {\em Frobenius injective} if the Frobenius functor
is injective. The following result is crucial.

\begin{lemma} \label{nonilp}
 Let $\C$ be a symmetric fusion category which is Frobenius injective of Frobenius
type $\Vec$. Then $\C$ is non-degenerate.
\end{lemma}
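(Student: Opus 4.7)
The plan is to reduce to showing that $R := K(\C)\otimes \k$ is semisimple and then invoke Proposition \ref{semndg}. The point is that on the basis $\O(\C)$ of $R$, Corollary \ref{FrFr} gives $[X]^p = [\Fr(X)]$, and Frobenius injectivity should force $\Fr$ to permute $\O(\C)$ so that the Frobenius endomorphism of $R$ becomes an automorphism of finite order.

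First I would upgrade ``Frobenius injective'' to the statement that $\Fr \colon \C \to \C^{(1)}$ is a tensor equivalence. Since $\C$ is fusion, each simple $X$ has $\End_\C(X) = \k$; full faithfulness then forces $\End_{\C^{(1)}}(\Fr(X)) = \k$, so $\Fr(X)$ is simple (the underlying abelian category of $\C^{(1)}$ coincides with that of $\C$, so $\O(\C^{(1)}) = \O(\C)$). Full faithfulness also makes the assignment $\O(\C) \to \O(\C)$ injective, hence bijective since the set is finite. So $\Fr$ is essentially surjective and therefore an equivalence; write $\pi$ for the resulting permutation of $\O(\C)$.

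Combining this with Corollary \ref{FrFr}, we get $[X]^p = [\pi(X)]$ in $R$ for every $X \in \O(\C)$. Let $N$ be the order of $\pi$; iterating yields $[X]^{p^N} = [X]$ for every simple $X$. Thus each basis element $[X]$ is a root of $t^{p^N} - t \in \k[t]$, which is separable and splits over $\k$ (since $\k$ is algebraically closed of characteristic $p$). Hence left multiplication by $[X]$ on $R$ is diagonalizable with eigenvalues in $\k$. These commuting diagonalizable operators generate $R$ acting on itself, so they are simultaneously diagonalizable; every element of $R$ therefore acts diagonalizably on $R$, and a nonzero nilpotent would act as a nonzero nilpotent diagonal matrix. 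Thus $R$ is reduced, and being a finite-dimensional commutative $\k$-algebra, it is semisimple, whence Proposition \ref{semndg} gives $\dim(\C) \ne 0$.

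I expect the main subtlety to be the first step: correctly handling the twisted $\k$-linear structure on $\C^{(1)}$ to conclude that fully faithful $\Fr$ really does yield an equivalence, so that the induced ring map on $K(\C)$ is genuinely a permutation of $\O(\C)$. (Alternatively one can invoke Corollary \ref{mininj}, since $\FPdim(\C^{(1)}) = \FPdim(\C)$, to get equivalence directly from injectivity.) Once this is settled, the remaining algebraic fact --- that a finite-dimensional commutative $\k$-algebra whose generators satisfy $t^{p^N} - t = 0$ is semisimple --- is routine.
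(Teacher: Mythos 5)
Your proof is correct and follows the paper's strategy: show $\Fr\colon\C\to\C^{(1)}$ is an equivalence, use Corollary~\ref{FrFr} to get $[X]^p=[\Fr(X)]$, deduce that $K(\C)\otimes\k$ is reduced (hence semisimple, being finite-dimensional commutative over algebraically closed $\k$), and conclude by Proposition~\ref{semndg}. The only divergence is the final linear-algebra step: the paper argues the $p$-th power map on $K(\C)\otimes\k$ is surjective and hence (being a semilinear endomorphism of a finite-dimensional space over a perfect field) injective, which directly rules out nilpotents, whereas you instead observe that each basis element satisfies the separable polynomial $t^{p^N}-t$ and invoke simultaneous diagonalizability --- both work, and yours is an equally valid, if slightly more hands-on, way to reach reducedness.
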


\begin{proof} By the assumptions the Frobenius functor $\C \to \C^{(1)}\bt \Ver_p$ lands 
to $\C^{(1)}\bt \Vec =\C^{(1)}$ and is an equivalence. By Corollary \ref{FrFr} we have $[X]^p=[\Fr(X)]$
in $K(\C)\ot \k$. It follows that the linear map $x\mapsto x^p$ on the ring $K(\C)\ot \k$ is
surjective and hence injective. Therefore the commutative ring $K(\C)\ot \k$ has no
nilpotent elements and therfore it is semisimple. The result follows by Proposition \ref{semndg}.
\end{proof} 

\begin{remark} We give here an easy alternative argument in the special case $p=2$. We claim that
in this case a Frobenius injective category $\C$ has no nontrivial self-dual simple objects. Indeed, if $X$
is self-dual then $\be$ appears in $X^{\ot 2}$ with multiplicity 1 and Corollary \ref{FrFr} implies that
$\be$ appears as a direct summand in $\Fr(X)$ which contradicts Frobenius injectivity. This implies Lemma \ref{nonilp}
in this case since contribution of each pair $(X, X^*)$ of non self-dual simple objects to $\dim(\C)$ equals 
$2\dim(X)^2=0$ and hence $\dim(\C)=1$.   
\end{remark}

\begin{corollary} \label{nosVec}
Let $p>2$ and let $\C$ be a symmetric fusion category which is Frobenius 
injective of Frobenius type $\sVec$. Then $\C$ is non-degenerate.
\end{corollary}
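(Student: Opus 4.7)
The strategy is to reduce Corollary \ref{nosVec} to Lemma \ref{nonilp} by producing a faithful $\BZ/2$-grading on $\C$ whose neutral component is Frobenius injective of Frobenius type $\Vec$.

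First, I would exploit the natural faithful $\BZ/2$-grading on $\sVec\subset \Ver_p$ (with neutral part $\Vec$ and odd part spanned by $\bL_{p-1}$) to grade $\C^{(1)}\bt\sVec$ by the second factor. Since $\Fr:\C\to \C^{(1)}\bt\sVec$ is fully faithful and monoidal, it sends each $X\in \O(\C)$ to a simple, hence homogeneous, object of $\C^{(1)}\bt\sVec$. Setting $\phi(X):=\deg(\Fr(X))$ yields a well-defined $\BZ/2$-grading on $\C$: for any simple summand $Z$ of $X\ot Y$, $\Fr(Z)$ is a simple summand of the homogeneous object $\Fr(X)\ot\Fr(Y)$ of degree $\phi(X)+\phi(Y)$. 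This grading is faithful, for otherwise $\Fr(\C)$ would be contained in $\C^{(1)}\bt\Vec$, contradicting the assumption that the Frobenius type of $\C$ is exactly $\sVec$.

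Let $\C_0\subset \C$ be the neutral component. The restriction $\Fr|_{\C_0}$ is still fully faithful and, by construction of $\phi$, takes values in the even part $\C^{(1)}\bt\Vec=\C^{(1)}$. Using naturality of the Frobenius functor with respect to the inclusion $\C_0\hookrightarrow \C$ (the remark following Corollary \ref{FrFr}), $\Fr|_{\C_0}$ is identified with the intrinsic Frobenius functor $\Fr_{\C_0}:\C_0\to \C_0^{(1)}\bt\Ver_p$, whose image must therefore lie in $\C_0^{(1)}\bt\Vec=\C_0^{(1)}$. Hence $\C_0$ is Frobenius injective of Frobenius type $\Vec$, and Lemma \ref{nonilp} gives $\dim(\C_0)\ne 0$. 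Applying Lemma \ref{grim}, $\dim(\C)=2\dim(\C_0)$, which is nonzero in $\k$ because $p>2$; so $\C$ is non-degenerate.

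The delicate point in the plan is verifying faithfulness of the grading and ensuring that the neutral component has Frobenius type exactly $\Vec$ rather than something larger. Both issues are handled automatically: $\phi$ is defined directly in terms of $\Fr$, so the hypothesis that the Frobenius type of $\C$ is exactly $\sVec$ (and not smaller) forces $\phi$ to hit the odd class, while the construction of $\C_0$ as the preimage of the even part immediately confines $\Fr(\C_0)$ to the $\Vec$-component.
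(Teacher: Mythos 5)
Your argument is correct and follows essentially the same route as the paper: define a $\BZ/2$-grading on $\C$ via the $\sVec$-component of $\Fr(X)$, observe it is faithful because the Frobenius type is exactly $\sVec$, show the neutral component $\C_0$ is Frobenius injective of Frobenius type $\Vec$, and then combine Lemma \ref{nonilp} with Lemma \ref{grim}. You spell out one step the paper leaves implicit — namely, using the compatibility of $\Fr$ with the inclusion $\C_0\hookrightarrow\C$ (the remark \emph{preceding}, not following, Corollary \ref{FrFr}) to identify $\Fr|_{\C_0}$ with the intrinsic $\Fr_{\C_0}$ and so confirm that $\C_0$ has Frobenius type $\Vec$ — which is a worthwhile clarification.
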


\begin{proof} For a simple object $X\in \C$ we have either $\Fr(X)=Y\bt \be$ or $\Fr(X)=
Y\bt \bL_{p-1}$. Let $\phi : \O(\C)\to \BZ/2\BZ$ be the function sending the objects
of the first type to $0\in \BZ/2\BZ$ and the objects of the second type to $1\in \BZ/2\BZ$.
Then $\phi$  is a faithful $\BZ/2\BZ -$grading (see Section \ref{fgen}) of the
category $\C$. Moreover the neutral component $\C_0$ is Frobenius injective of Frobenius type
$\Vec$. Thus $\C_0$ is non-degenerate by Lemma \ref{nonilp}. The result follows since
by Lemma \ref{grim} $\dim(\C)=2\dim(\C_0)$.

\end{proof}

\begin{lemma}\label{Vplus}
 Let $\C$ be of Frobenius type $\Ver_p$ or $\Ver_p^+$ (so $p>3$).
Then the image of Frobenius functor contains $\be \bt \Ver_p^+\subset \C^{(1)}\bt \Ver_p$.
\end{lemma}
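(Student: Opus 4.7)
The plan is to reduce the problem, via Proposition \ref{subV}(i), to producing a single object: it suffices to show that $\be \bt \bL_3$ lies in $\Fr(\C)$. Indeed $\Ver_p^+$ is generated as a fusion subcategory by $\bL_3$, and $\Fr(\C)$ is by definition a fusion subcategory of $\C^{(1)}\bt \Ver_p$, so once $\be \bt \bL_3 \in \Fr(\C)$ the whole of $\be \bt \Ver_p^+$ is forced into $\Fr(\C)$.

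To locate such a summand I would proceed as follows. Since $\C$ has Frobenius type $\Ver_p$ or $\Ver_p^+$, the image $\Fr(\C)$ is not contained in $\C^{(1)}\bt \sVec$, so there exists some $X\in \C$ and a simple summand $Y\bt \bL_s$ of $\Fr(X)\in \C^{(1)}\bt \Ver_p$ with $s\notin \{1,p-1\}$. Now consider $\Fr(X\ot X^*)\simeq \Fr(X)\ot \Fr(X)^*$, which lies in $\Fr(\C)$ because $\Fr$ is a symmetric tensor functor. Expanding the tensor product and taking the ``diagonal'' term, one sees that $(Y\ot Y^*)\bt (\bL_s\ot \bL_s^*)$ is a direct summand. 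Simplicity of $Y$ gives $\be$ as a summand of $Y\ot Y^*$, while equation \eqref{vermul2} gives $\bL_3$ as a summand of $\bL_s\ot \bL_s^*$ precisely because $s\neq 1,p-1$. Therefore $\be\bt \bL_3$ is a direct summand of an object of $\Fr(\C)$, and closure of the fusion subcategory $\Fr(\C)$ under direct summands finishes the argument.

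The conceptual step — and in some sense the only nontrivial idea — is replacing $X$ by $X\ot X^*$ before applying $\Fr$. This is what decouples the two tensor factors of $\C^{(1)}\bt \Ver_p$: the $\C^{(1)}$-factor of $\Fr(X)\ot \Fr(X)^*$ automatically contains $\be$, which lets one read off the $\Ver_p$-content independently. I do not anticipate any genuine obstacle beyond this observation, since relation \eqref{vermul2} was recorded in exactly the form needed and Proposition \ref{subV}(i) reduces the task to the single generator $\bL_3$.
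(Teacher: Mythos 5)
Your proof is correct and follows essentially the same line as the paper's: pass from $X$ to $X\ot X^*$, use that $\Fr$ is a tensor functor to get the summand $(Y\ot Y^*)\bt(\bL_s\ot\bL_s^*)$, extract $\be\bt\bL_3$ via simplicity of $Y$ and \eqref{vermul2}, and invoke Proposition \ref{subV}(i). The only (cosmetic) difference is that you make the reduction to producing $\be\bt\bL_3$ explicit up front, and you correctly write the condition as $s\notin\{1,p-1\}$ where the paper has a small typo ($s\ne 0,p-1$).
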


\begin{proof} By assumption there is a simple object $X\in \C$ such that $\Fr(X)$ contains
a summand of the form $Y\bt \bL_s$ with $s\ne 0, p-1$. Then $\Fr(X\ot X^*)$ contains
a summand $(Y\ot Y^*)\bt (\bL_s\ot \bL_s^*)$. Since $\be \subset Y\ot Y^*$ and $\bL_3
\subset \bL_s\ot \bL_s^*$ we see that the image of Frobenius functor contains $\be \bt \bL_3$.
The result follows since $\bL_3$ generates $\Ver_p^+$ by Proposition \ref{subV} (i).
\end{proof}

\subsection{Completion of the proof} For a sake of contradiction let us assume that 
Theorem \ref{main} does not hold. Then there exists a counterexample $\C$ with minimal
possible $\FPdim(\C)$, see Section \ref{FPdim}. Then any $\k-$linear symmetric tensor functor from $\C$
to another symmetric fusion category is injective by Corollary \ref{mininj}. In particular, the category $\C$
is Frobenius injective. 

If $\C$ is of Frobenius type $\Vec$ then by Lemma \ref{nonilp} and Proposition \ref{ndgfib} there exists
(necessarily injective) $\k-$linear symmetric tensor functor $\C \to \sVec$ and we have
a contradiction. Similarly, if $\C$ is of Frobenius type $\sVec$ then by Corollary \ref{nosVec} 
and Proposition \ref{ndgfib} there exists $\k-$linear symmetric tensor functor $\C \to \sVec$ and we also have
a contradiction. Note that this completes the proof in the cases $p=2$ and $p=3$.

Thus $\C$ is forced to be of Frobenius type $\Ver_p^+$ or $\Ver_p$. Recall that
$\C$ is Frobenius injective. Let $\tilde \C \subset \C$
be the subcategory generated by simple objects $X$ such that $\Fr(X)=Y\bt \be$ or
$\Fr(X)=Y\bt \bL_{p-1}$. Clearly $\tilde \C$ is a fusion subcategory of $\C$ of Frobenius type
$\Vec$ or $\sVec$. Thus by Lemma \ref{nonilp} and Corollary \ref{nosVec} $\tilde \C$
is non-degenerate. 

\begin{lemma}\label{last}
(i) The image of $\C$ under the Frobenius functor 
is generated by $\Fr(\tilde \C)$ and $\be \bt \Ver_p^+$.

(ii) The fusion subcategory generated by $\Fr(\tilde \C)$ and $\Ver_p^+$ is equivalent to
$\tilde \C \bt \Ver_p^+$.
\end{lemma}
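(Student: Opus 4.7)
The plan is to prove (ii) first by a direct application of Lemma \ref{AA}, and then to deduce (i) via a case analysis on the $\Ver_p$-component of $\Fr(X)$, using Lemma \ref{Vplus}.

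For (ii), I take $\A_1 := \Fr(\tilde\C)$ and $\A_2 := \be \bt \Ver_p^+$ as fusion subcategories of $\C^{(1)} \bt \Ver_p$. Frobenius injectivity of $\C$ makes $\Fr|_{\tilde\C}: \tilde\C \to \A_1$ a $\k$-linear symmetric tensor equivalence, while $\A_2 \simeq \Ver_p^+$ tautologically. By the definition of $\tilde\C$, $\A_1 \subseteq \C^{(1)} \bt \sVec$, and obviously $\A_2 \subseteq \Vec \bt \Ver_p^+$. Since the only simple object common to $\sVec$ and $\Ver_p^+$ inside $\Ver_p$ is $\be$, any simple $Y \bt Z$ lying in both $\A_1$ and $\A_2$ is forced to satisfy $Y = Z = \be$; Lemma \ref{AA} then yields $\langle \A_1, \A_2\rangle \simeq \A_1 \bt \A_2 \simeq \tilde\C \bt \Ver_p^+$.

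For (i), the inclusion $\langle \Fr(\tilde\C), \be \bt \Ver_p^+\rangle \subseteq \Fr(\C)$ is immediate from Lemma \ref{Vplus}. For the reverse inclusion, fix $X \in \O(\C)$. Since $\Fr$ is fully faithful, $\Fr(X)$ is simple in $\C^{(1)} \bt \Ver_p$, so $\Fr(X) = Y \bt \bL_r$ for some $Y \in \O(\C^{(1)})$ and some $1 \le r \le p-1$. If $r$ is odd, then $\bL_r \in \Ver_p^+$; tensoring $\Fr(X)$ with $\be \bt \bL_r \in \Fr(\C)$ and using that $\bL_r$ is self-dual in $\Ver_p$, the product $Y \bt (\bL_r \ot \bL_r)$ contains $Y \bt \be$ as a summand. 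Hence $Y \bt \be \in \Fr(\C)$ necessarily equals $\Fr(X')$ for some $X' \in \O(\C)$ whose $\Ver_p$-component is $\be$, placing $X' \in \tilde\C$; then $\Fr(X) = \Fr(X') \ot (\be \bt \bL_r)$ lies in $\langle \Fr(\tilde\C), \be \bt \Ver_p^+\rangle$. If $r$ is even, then $p-r$ is odd and $\bL_{p-r} \in \Ver_p^+$; tensoring $\Fr(X)$ with $\be \bt \bL_{p-r}$ produces $Y \bt (\bL_r \ot \bL_{p-r})$, and the fusion rule \eqref{vermul1}, with $c = \min(r, p-r)$ and the extremal term $i = c$, shows $\bL_{p-1}$ appears as a direct summand of $\bL_r \ot \bL_{p-r}$. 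Consequently $Y \bt \bL_{p-1} \in \Fr(\C)$ equals $\Fr(X'')$ for some $X'' \in \tilde\C$ (its $\Ver_p$-component being $\bL_{p-1}$), and $\Fr(X) = \Fr(X'') \ot (\be \bt \bL_{p-r})$ again lies in $\langle \Fr(\tilde\C), \be \bt \Ver_p^+\rangle$.

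The main technical content, and the step I expect to need the most thought, is the even-$r$ subcase of (i): self-duality of $\bL_r$ alone does not help, because $\bL_{p-1}$ never appears in $\bL_r \ot \bL_r$, so one must locate a different odd partner. The correct choice is $\bL_{p-r}$, for which the extremal summand of the Clebsch--Gordan expansion \eqref{vermul1} turns out to be precisely $\bL_{p-1}$; identifying this partner is the only nontrivial step, and the remaining verification is then a short calculation.
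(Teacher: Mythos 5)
Your proof is correct and is essentially the paper's proof. Part (ii) is the same application of Lemma \ref{AA}, and part (i) implements the same strategy (tensor $\Fr(X)=T\bt\bL_s$ against the object $\be\bt(\bL_s\ot\delta_s)$ coming from Lemma \ref{Vplus}, extract a summand $T\bt\delta_s$ lying in $\Fr(\tilde\C)$ by Frobenius injectivity, and reassemble). The only cosmetic difference is in the even case of (i): the paper introduces the uniform notation $\delta_s\in\{\be,\bL_{p-1}\}$ and observes that $\be\subset\bL_s\ot\bL_s$ implies $\delta_s\subset\bL_s\ot(\bL_s\ot\delta_s)$, whereas you verify directly from \eqref{vermul1} that $\bL_{p-1}$ is the extremal summand of $\bL_r\ot\bL_{p-r}$; these are the same fact since $\bL_{p-r}=\bL_r\ot\bL_{p-1}$, and you could save yourself the explicit Clebsch--Gordan check by noting this. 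The observation you flag as the non-trivial step (that $\bL_{p-1}$ sits inside $\bL_r\ot\bL_{p-r}$) is indeed the crux, and your computation is correct.
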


\begin{proof} (i)  We have $\Fr(\C)\supset \Fr(\tilde \C)$; also 
the image $\Fr(\C)$ contains $\be \bt \Ver_p^+$ by Lemma \ref{Vplus}. 
Thus it remains to show that $\Fr(\C)$ is contained in the fusion subcategory
generated by $\Fr(\tilde \C)$ and $\be \bt \Ver_p^+$.

Recall that $\C$ is Frobenius injective.
Let $X\in \O(\C)$ with $\Fr(X)=T\bt \bL_s$. Let $\delta_s=\be$ if $s$ is odd and $\delta_s=\bL_{p-1}$ if $s$ is even.
Then $\bL_s\ot \delta_s\in \Ver_p^+$ (see \eqref{delta}) and there exists
$Y\in \O(\C)$ such that $\Fr(Y)=\be \bt (\bL_s\ot \delta_s)$. 
Then $\Fr(X\ot Y)=T\bt (\bL_s\ot \bL_s\ot \delta_s)$ contains a summand $T\bt \delta_s$. 
Hence $X\ot Y$ contains a summand $Z\in \O(\C)$ such that $\Fr(Z)=T\bt \delta_s$. Thus $Z\in \tilde \C$
and $\Fr(X)=T\bt \bL_s$ is isomorphic to $T\bt (\delta_s \ot \bL_s\ot \delta_s)=\Fr(Z)\ot (\be \bt (\bL_s\ot \delta_s))$ 
which shows that $\Fr(X)$ is contained in the subcategory generated by $\Fr(\tilde \C)$ and $\be \bt \Ver_p^+$ as desired.

(ii) The simple objects of $\Fr(\tilde \C)$ are of the form $T\bt \delta$ where $\delta =\be$ or $\delta =\bL_{p-1}$, 
and the simple objects of $\be \bt \Ver_p^+$ are $\be \bt \bL_s$ with odd $s$. Thus the only simple object which
belongs to both subcategories is $\be \bt \be$ and the result follows from Lemma \ref{AA}. 
\end{proof}

Thus by Lemma \ref{last} the Frobenius functor induces a $\k-$linear symmetric tensor functor
$\C \to \tilde \C \bt \Ver_p^+$. Since $\tilde \C$ is non-degenerate by Proposition \ref{ndgfib} there exists 
a $\k-$linear symmetric tensor functor $\tilde \C \to \sVec$. Taking the composition we get
a functor $\C \to \sVec \bt \Ver_p^+=\Ver_p$ (see Proposition \ref{subV} (iii)). Thus $\C$ is not a counterexample
to Theorem \ref{main}, so no such counterexample exists. 

\subsection{Examples and complements}
\subsubsection{}
Let $p>0$ and
let $G_{a,1}$ be the Frobenius kernel of the additive group $G_a$, see e.g. \cite[2.2]{J}.
Then representations of $G_{a,1}$ are the same as representations of the Hopf algebra $\k [x]/x^p$ where
$x$ is primitive element (that is $\Delta(x)=x\ot 1+1\ot x$). The indecomposable
objects of $\Rep_\k(G_{a,1})$ are Jordan cells of sizes $1,\ldots,p$; moreover the decompositions
of tensor products are precisely the same as in Section \ref{green}, see e.g. \cite[p. 611]{G}.
In particular, the Grothendieck ring of the quotient category $\overline{\Rep_\k(G_{a,1})}$ is
isomorphic to the Verlinde ring $K(\Ver_p)$ as a based ring; moreover the isomorphism respects the dimensions
of objects. We claim that the functor $F: \overline{\Rep_\k(G_{a,1})}\to \Ver_p$ existing 
by Theorem \ref{main} is an equivalence.
Indeed, it is easy to see from explicit formula \cite[Exercise 4.10.7]{EGNO} that for any $s\ne 1,2,p-2,p-1$
we have $\FPdim(\bL_s)>\FPdim(\bL_2)$ and $\FPdim(\bL_2)\not \in \BZ$ for $p>3$.
Hence $F$ should send the two dimensional Jordan cell
to either $\bL_2$ or $\bL_{p-2}$; however the second case is impossible 
since $\dim(\bL_{p-2})=p-2\ne 2$.
Therefore \eqref{vermul} implies that $F$ sends the Jordan cell of size $s$ to $\bL_s$ and thus
$F$ is an equivalence $\overline{\Rep_\k(G_{a,1})}\simeq \Ver_p$.

\subsubsection{}\label{sl2}
In \cite{GK, GM} the authors construct examples of symmetric fusion categories over $\k$ as follows. Let $G$ be a semisimple algebraic group such that its {\em Coxeter number} (see
e.g. \cite[II.6.2]{J}) is
smaller than $p$. The category $\Rep_\k(G)$ contains a Karoubian (but not abelian) tensor 
subcategory $\T(G)$ of {\em tilting modules},
see \cite{A,GM} or \cite[II.E]{J}. The quotient $\overline{\T(G)}$ of this category by the negligible morphisms (see Section \ref{negl}) is an example of symmetric fusion category.
In the special case $G=SL_2$ it is known
that $K(\overline{\T(SL_2)})\simeq K(\Ver_p)$ as a based ring and the isomorphism 
respects the dimensions, see \cite{GM}.
As in the preceding paragraph it follows that we have an equivalence of symmetric fusion categories
$\overline{\T(SL_2)}\simeq K(\Ver_p)$ as it was promised in Section \ref{conjsec}.

\subsubsection{}
Finally, we sketch a direct construction of the functor $\overline{\T(G)}\to \Ver_p$ guaranteed by
Theorem \ref{main}. Let $G_{a,1}\subset G$ be an embedding associated with a {\em regular
nilpotent element} of the Lie algebra of $G$. The restriction gives a $\k-$linear symmetric
tensor functor $\T(G)\to \Rep_\k(G_{a,1})$. The theory of {\em support varieties} shows that
an indecomposable object of $\T(G)$ of dimension zero is sent by this functor to a projective
object of $\Rep_\k(G_{a,1})$, see \cite[E13]{J}. It follows that the restriction functor descends
to a functor $\overline{\T(G)}\to \overline{\Rep_\k(G_{a,1})}$. Combining this with equivalence
$\overline{\Rep_\k(G_{a,1})}\simeq \Ver_p$ we get a desired 
functor $\overline{\Rep_\k(G_{a,1})}\to \Ver_p$.


\bibliographystyle{ams-alpha}

\begin{thebibliography}{A} 

\bibitem[A]{A} H.~H.~Andersen, \textit{Tensor products of quantized tilting modules,} 
Comm. Math. Phys. {\bf 149} (1992), no. 1, 149-159.

\bibitem[AAITV]{A+} N.~Andruskiewitsch, I.~Angiono, A.~G.~Iglesias, B.~Torrecillas, C.~Vay,   \textit{From Hopf algebras to tensor categories,} 
 arXiv:1204.5807. 


\bibitem[BW]{BW} J.~Barrett and B.~Westbury.
\textit{Spherical categories,} 
Adv. Math. \textbf{143} (1999), 357-375.

 \bibitem[CO]{CO} J.~Comes, V.~Ostrik, \textit{On Deligne's category $\uRep^{ab}(S_t)$,} 
Algebra \& Number Theory, {\bf 8} (2014), no. 2, p. 473-496.

\bibitem[DM]{DM} P.~Deligne and J.~Milne,
\textit{Tannakian categories,} Lecture Notes in Mathematics {\bf 900} (1982). 

\bibitem[D90]{Dta} P.~Deligne, \textit{Cat\'egories tannakiennes,} 
The Grothendieck Festschrift, Vol. II, 111-195, Progr. Math., 87, Birkh\"auser Boston, Boston, MA, 1990.

\bibitem[D02]{Dte} P.~Deligne, 
\textit{Cat\'egories tensorielles},  Mosc. Math. J.  {\bf 2} (2002),  no. 2, 227-248.

\bibitem[D07]{Ds} P.~Deligne, \textit{La cat\'egorie des repr\'esentations du groupe sym\'etrique 
$S_t$, lorsque $t$ n'est pas un entier naturel,} in: Algebraic Groups and Homogeneous Spaces, in: Tata Inst. Fund. Res. Stud. Math., Tata Inst. Fund. Res., Mumbai, 2007,  209-273.

\bibitem[DG]{DG} M.~Demazure, P.~Gabriel, 
\textit{Groupes alg\'ebriques, tome I}, Paris / Amsterdam 1970 (Masson / North-Holland)

\bibitem[DGNO]{DGNO} 
V.~Drinfeld, S.~Gelaki, D.~Nikshych, and V.~Ostrik,
\textit{On braided fusion categories I}, Selecta Mathematica (N. S.)
\textbf{16} (2010), 1-119.

\bibitem[EG]{EG} 
P.~Etingof, S.~Gelaki,
\textit{The classification of finite-dimensional triangular Hopf algebras over an algebraically closed field of characteristic 0,}
Mosc. Math. J.  {\bf 3} (2003),  no. 1, 37-43.

\bibitem[EGNO]{EGNO} 
P.~Etingof, S.~Gelaki, D.~Nikshych, and V.~Ostrik,
\textit{Tensor categories}, American Mathematical Society, to appear. 

\bibitem[ENO]{ENO} 
P.~Etingof, D.~Nikshych, and V.~Ostrik,
\textit{On fusion categories}, Annals of Mathematics 
\textbf{162} (2005), 581-642.

\bibitem[GK]{GK} S.~Gelfand and  D.~Kazhdan.
\textit{Examples of tensor categories,}
Invent. Math. \textbf{109} (1992), no. 3, 595-617.

\bibitem[GM]{GM} G.~Georgiev and O.~Mathieu. 
\textit{Fusion rings for modular representations of Chevalley groups,}
Contemp. Math. \textbf{175} (1994), 89-100.

\bibitem[G]{G} J.~A.~Green, \textit{The modular representation algebra of a finite group,}
Illinois J. Math. \textbf{6} (1962), 607-619.

\bibitem[J]{J} 
J.~C.~Jantzen,
\textit{Representations of algebraic groups}, 2nd edition, American Mathematical Society,
Providence, RI, 2003. 

\bibitem[SR]{SR} Saavedra Rivano, N.,
\textit{Cat\'egories Tannakiennes}, Lecture Notes in Math \textbf{265},
Springer-Verlag, Berlin-New York, 1972.

\end{thebibliography}

\end{document}